\title{On SL(3,$\mathbb{C}$)-representations of the Whitehead link group}
\author{Antonin Guilloux \\ IMJ-PRG, UPMC \\
OURAGAN, INRIA \\
4, Place Jussieu\\
75005 Paris, France
\and Pierre WILL \\
Universit\'e Grenoble Alpes\\
Institut Fourier, B.P.74\\
38402 Saint-Martin-d'H\`eres Cedex\\
France}
\begin{document}
\maketitle

\def\og{\leavevmode\raise.3ex\hbox{$\scriptscriptstyle\langle\!\langle$~}}
\def\fg{\leavevmode\raise.3ex\hbox{~$\!\scriptscriptstyle\,\rangle\!\rangle$}}
\newcommand{\la}{\langle}
\newcommand{\ra}{\rangle}
\newcommand{\HdC}{\mathbf H^{2}_{\mathbb C}}
\newcommand{\HdR}{\mathbf H^{2}_{\mathbb R}}
\newcommand{\HtR}{\mathbf H^{3}_{\mathbb R}}
\newcommand{\HuC}{\mathbf H^{1}_{\mathbb C}}
\newcommand{\HnC}{\mathbf H^{n}_{\mathbb C}}
\newcommand{\Cdu}{\mathbb C^{2,1}}
\newcommand{\Ct}{\mathbb C^{3}}
\newcommand{\C}{\mathbb C}
\newcommand{\PP}{\mathbb P}
\newcommand{\Mn}{\mbox{M}_n\left(\mathbb C\right)}
\newcommand{\R}{\mathbb R}
\newcommand{\A}{\mathbb A}
\newcommand{\cC}{\mathfrak C}
\newcommand{\cCp}{\mathfrak C_{\mbox{p}}}
\newcommand{\cT}{\mathfrak T}
\newcommand{\rR}{\mathfrak R}
\newcommand{\hH}{\mathfrak H}
\newcommand{\n}{\noindent}
\newcommand{\p}{{\bf p}}
\newcommand{\lox}{\mbox{\scriptsize{lox}}}
\newcommand{\Td}{T^{2}_{\left(1,1\right)}}
\newcommand{\T}{T_{\left(1,1\right)}}
\newcommand{\PSLdR}{\mbox{\rm PSL(2,$\R$)}}
\newcommand{\PSLdC}{\mbox{\rm PSL(2,$\C$)}}
\newcommand{\SLdC}{\mbox{\rm SL(2,$\C$)}}
\newcommand{\SLdR}{\mbox{\rm SL(2,$\R$)}}
\newcommand{\Pu}{\mbox{\rm PU(2,1)}}
\newcommand{\X}{\mbox{\textbf{X}}}
\newcommand{\tr}{\mbox{\rm tr}}
\newcommand{\B}{\mathcal{B}}
\newcommand{\bx}{{\boxtimes}}
\newcommand{\bp}{{\bf p}}
\newcommand{\om}{\omega}
\newcommand{\bm}{{\bf m}}
\newcommand{\bv}{{\bf v}}
\newcommand{\bA}{{\bf A}}
\newcommand{\bB}{{\bf B}}
\newcommand{\bc}{{\bf c}}
\newcommand{\ba}{{\bf a}}
\newcommand{\bd}{{\bf d}}
\newcommand{\bb}{{\bf b}}
\newcommand{\bn}{{\bf n}}
\newcommand{\bq}{{\bf q}}
\newcommand{\bz}{{\bf z}}
\newcommand{\td}{{\tt d}}
\newcommand{\PGL}{{\mathrm{PGL}}}
\newcommand{\SL}{{\mathrm{SL}}}
\newcommand{\PSL}{{\mathrm{PSL}}}
\newcommand{\Hom}{\mbox{\rm Hom}}
\newcommand{\tD}{{\tt D}}
\newcommand{\I}{\mathcal{I}}
\newcommand{\Z}{{\mathbb Z}}
\newcommand{\Isom}{\mathcal{I}}
\renewcommand{\arg}{\mbox{arg}}
\renewcommand{\Re}{\mbox{\rm Re}\,}
\renewcommand{\Im}{\mbox{\rm Im}\,}
\renewcommand{\leq}{\leqslant}
\renewcommand{\geq}{\geqslant} 
\newtheorem{theo}{Theorem}
\newtheorem*{theo*}{Theorem}
\newtheorem{lem}[theo]{Lemma}
\newtheorem{coro}[theo]{Corollary}
\newtheorem{prop}[theo]{Proposition}
\newtheorem{conj}[theo]{Conjecture}
\theoremstyle{definition}
\newtheorem{defi}{Definition}
\newtheorem{rem}{Remark}
\newtheorem{ex}{Example}

\begin{abstract}
{We describe a family of representations in SL(3,$\C$) of the fundamental group $\pi$ of the 
Whitehead link complement. These representations are obtained by considering pairs of regular order 
three elements in SL(3,$\C$) and can be seen as factoring through a quotient of $\pi$ defined by a 
certain exceptional Dehn surgery on the Whitehead link. Our main result is that these representations form an algebraic component of the SL(3,$\C$)-character variety of $\pi$.}
\end{abstract}

\section{Introduction}

 Let $M$ be a manifold. The description of the character variety 
of $\pi_1(M)$ in a Lie group $G$ is closely related to the study
of geometric structures on $M$ modelled on a $G$-space $X$. In this setting,
representations of $\pi_1(M)$ into $G$ appear as holonomies of $(X,G)$-structures.
In the case of a hyperbolic $3$-manifold $M$, a natural target group  is $\PSLdC$ (or $\SLdC$), 
as the holonomy of a hyperbolic structure on $M$ has image contained in $\PSLdC$. The study of these character varieties
was initiated by Thurston, in the non-compact case, who described a natural way of constructing explicit representations of 
$\pi_1(M)$ in $\PSLdC$ using ideal triangulations of $M$ (see \cite{Thu}). The rough idea is to parametrise  hyperbolic 
ideal tetrahedra using cross-ratios, and to analyse the possible ways of constructing the hyperbolic structure on $M$ by 
gluing together these ideal tetrahedra. This method gives rise to a family of polynomial equations expressed in terms of a 
family of cross-ratios, which are often referred to as {\it Thurston's gluing equations} (see Chapter 4 of \cite{Thu}). The
output of this method is a subvariety of $\C^n$ consisting of those tuples of parameters that satisfy Thurston's equations, which 
is called the {\it deformation variety}. Representations can be expressed in terms of the cross-ratios, and one of 
the main interests of the deformation variety is that it allows explicit computations, which are very useful 
for experiments.

Thurston's approach has been generalized for the  higher dimensional target groups $\SL(n,\C)$ in 
\cite{BFG,GTZ,DGG,GGZ,Zick}. This generalisation is geometrically meaningful. Indeed, 
the subgroups SU$(2,1)$ and $\SL(3,\R)$ of $\SL(3,\C)$ correspond respectively to spherical 
CR structures (see below) and real projective flag structures (see \cite{FS}), whereas 
$\SL(4,\R)$ corresponds to projective structures on $3$-manifolds, well-studied in the convex case 
\cite{Ben_survey}. The first examples following Thurston's point of view for higher dimensional Lie 
groups were produced by Falbel in \cite{FalJDG}, who constructed and studied examples of representations
of the figure 8 knot group to SU(2,1) (see also \cite{DF8,Falb-Wang}).
A parallel is also to be drawn with higher Teichm\"uller theory in case of surfaces. 
In this note, we focus on the target group $\SL(3,\C)$.

Though simple in spirit, this method of describing representation varieties becomes very involved when the number of tetrahedra 
grows. In fact, the only $\SL(3,\C)$-character variety of a hyperbolic $3$-manifold that has been completely described so 
far with this approach is the one of the figure 8 knot complement \cite{FGKRT}, which admits an ideal triangulation by two tetrahedra 
(see  Section 3.1 of \cite{Thu}). Different methods have been used to describe examples of character varieties. In \cite{HMP}, Heusener, 
Mu\~noz and Porti gave another description of the character variety of the figure 8 group starting directly from the group presentation.
In \cite{MunozPorti}, Munoz and Porti described character varieties for torus knots. We will consider here the example of 
the Whitehead link complement (which can be triangulated by 4 ideal simplices). Denote by $\pi$ its fundamental group. 
Denoting by $[x,y]=xyx^{-1}y^{-1}$ the commutator of $x$ and $y$, a possible presentation for $\pi$ is:
\begin{equation*}
 \la x,y\, \vert\, [x,y][x,y^{-1}][x^{-1},y^{-1}][x^{-1},y]\ra.
\end{equation*}
Let $\chi_3(\pi)$ be the corresponding $\SL(3,\C)$-character variety, that is the GIT quotient
$$ \chi_3(\pi)={\rm Hom}(\pi,\SL(3,\C))//\SL(3,\C).$$
The full computation of $\chi_3(\pi)$ is not achieved as of today and seems a difficult task.
Our goal here is to describe an algebraic component of $\chi_3(\pi)$ that
contains many examples of geometrically meaningful representations.

Our motivation comes from  the study of the so-called 
{\it spherical CR structures} on hyperbolic 3-manifolds. These structures are examples of $(G,X)$-structures 
where $X$ is $\mathbf{S}^3$ and $G$ is PU(2,1). The holonomy of such a structure is thus a representation of $\pi_1(M)$ in 
PU(2,1). This motivates the study of representations of $\pi$ in $\textrm{SU}(2,1)$ which is a real form of SL(3,$\C$). Recall that PU$(2,1)$ is the group of holomorphic isometries of 
the complex hyperbolic plane $\HdC$ and SU$(2,1)$ is a triple cover of it. The sphere $\mathbf{S}^3$ is the boundary at infinity of $\HdC$. In particular, 
spherical CR structures arise naturally on the boundary
at infinity of quotients of $\HdC$ with non-empty region of discontinuity. 
Spherical CR structures can also be thought of as examples of {\it projective flag structures} on 3-manifolds on $M$, of which 
holonomies  are representations of $\pi_1(M)$ to PSL(3,$\C$).

Striking examples of spherical CR structures have been produced by R. Schwartz in \cite{S,Schbook} about fifteen years ago. There, 
Schwartz described what is now called a {\it spherical CR uniformisation} of the Whitehead link complement, that is a spherical 
CR structure with the additional property that the holonomy representation has non-empty region of dicontinuity with 
quotient homeomorphic to the Whitehead link complement (see \cite{Derrep} for a precise definition). Since then, Deraux 
and Falbel \cite{DF8} produced a spherical CR uniformisation
 of the complement of the figure eight knot, Deraux \cite{DerF8} and Acosta \cite{Aco} deformed this uniformisation, Deraux \cite{Derrep} described a uniformisation of the manifold $m_{009}$, and Parker-Will \cite{ParkWi2} described 
another uniformisation of the Whitehead link complement, different from Schwartz's one.

 Our goal here is to provide a common frame for all these examples. The crucial remark is the following: the group $\pi$ 
has $\pi ' = \Z_3 * \Z_3$ as a quotient.  Moreover, it has been observed that all the representations alluded to 
above have two features in common: 
\begin{enumerate}
\item their sources are quotients of $\pi$ (hence these representations can be seen as representations of $\pi$) 
\item these representations factor through $\pi'$. 
\end{enumerate}

The second item can be rephrased by saying that the images of all these representations
are subgroups of PU$(2,1)$ generated by a pair of regular order three elements (see the 
introduction of \cite{ParkWi2} for a list of groups generated by two regular order three elements that are known to be 
discrete and provide examples of spherical CR structures on various link complements).

We are going to prove that a component of the SL(3,$\C$)-character variety of $\pi$ is formed
by representations that factor through $\pi'$. Moreover this component contains all the representations mentionned 
above. Let us describe this component more precisely. Define $X_0$ as the subset of the character variety of $\pi'$ corresponding to representations 
generated by two regular order three elements of $\SL(3,\C)$ (recall that an order three element in $\SL(3,\C)$ is regular 
if and only if its trace is $0$). Our main result is the following.

\begin{theo}\label{thm:main}
The character variety $\chi_3(\pi)$ contains $X_0$ as an algebraic component of dimension $4$. In particular, all 
representation classes in this component are unfaithful.
\end{theo}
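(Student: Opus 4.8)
The plan is to combine an explicit parametrisation of $X_0$ with a single cohomological computation (Fox calculus) carried out at one carefully chosen representation. First I would record that $X_0$ sits inside $\chi_3(\pi)$ as a closed irreducible subvariety and that the ``factoring through $\pi'$'' claim will be automatic. Let $q\colon\pi\twoheadrightarrow\pi'=\Z_3*\Z_3$ be the surjection furnished by the exceptional Dehn filling; it induces a closed embedding $q^{*}\colon\chi_3(\pi')\hookrightarrow\chi_3(\pi)$ whose image is exactly the set of characters factoring through $q$. Since $\Hom(\pi',\SL(3,\C))=\{(A,B)\in\SL(3,\C)^{2}\,:\,A^{3}=B^{3}=I\}$, and a matrix $A\in\SL(3,\C)$ with $A^{3}=I$ is either central or regular, the locus where both $A$ and $B$ are regular (equivalently have trace $0$) is a union of connected components of $\Hom(\pi',\SL(3,\C))$; its $\SL(3,\C)$-quotient is $X_0$, which is therefore closed in $\chi_3(\pi')$, hence in $\chi_3(\pi)$, and is irreducible, being the quotient of the product of two irreducible regular order-three conjugacy classes. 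By construction every character of $X_0$ factors through $\pi'$, so the last sentence of the theorem follows at once from the component statement.

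Next I would compute $\dim X_0$. Normalise $A=\mathrm{diag}(1,\omega,\omega^{2})$ with $\omega=e^{2i\pi/3}$; then $B$ ranges over the $6$-dimensional conjugacy class $\mathcal C$ of regular order-three elements, and the residual conjugation freedom is the action of the centraliser of $A$, a $2$-dimensional maximal torus $T$. For generic $B\in\mathcal C$ the stabiliser $T\cap Z(B)$ reduces to the centre $\Z_3$ of $\SL(3,\C)$, so the $T$-orbit of $B$ has dimension $2$ and $\dim X_0=\dim\mathcal C-2=4$. Since $X_0$ is irreducible, every one of its points has local dimension $4$ in $\chi_3(\pi)$ at least, so it suffices to prove equality at a single point.

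The heart of the argument is to show $X_0$ is a whole component. For this I would choose an irreducible $\rho_0\in X_0$ — for instance the holonomy of one of the known spherical CR uniformisations (Schwartz, Parker--Will), written as $\rho'\circ q$ with $\rho'(a),\rho'(b)$ explicit regular order-three matrices — and compute the Zariski tangent space of $\Hom(\pi,\SL(3,\C))$ there. Using the one-relator presentation $\pi=\la x,y\mid r\ra$, this tangent space is contained in the cocycle space $Z^{1}(\pi,\mathfrak{sl}(3,\C)_{\mathrm{Ad}\,\rho_0})=\ker\Phi_r$, where $\Phi_r(u,v)=\mathrm{Ad}\,\rho_0(\partial r/\partial x)\,u+\mathrm{Ad}\,\rho_0(\partial r/\partial y)\,v$ is an explicit $8\times16$ matrix assembled from the Fox derivatives of the four-commutator relator. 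I expect $\operatorname{rank}\Phi_r=4$, hence $\dim Z^{1}=12$; then $\dim_{\rho_0}\Hom(\pi,\SL(3,\C))\le12$, and since the $\SL(3,\C)$-orbit of the irreducible $\rho_0$ is $8$-dimensional, $\dim_{[\rho_0]}\chi_3(\pi)\le4$. Together with the lower bound of the previous paragraph, $\chi_3(\pi)$ is smooth at $[\rho_0]$, of dimension $4$, and coincides there with $X_0$; by lower semicontinuity of $\operatorname{rank}\Phi_r$ the same holds on a dense open subset of the irreducible variety $X_0$, so $X_0$ is an irreducible component of $\chi_3(\pi)$ of dimension $4$.

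The main obstacle is precisely this rank computation: one has to produce a genuinely irreducible $\rho_0\in X_0$ with workable matrix entries, differentiate the relator $[x,y][x,y^{-1}][x^{-1},y^{-1}][x^{-1},y]$ via Fox calculus, build the $8\times16$ matrix $\Phi_r$, and verify that its rank is exactly $4$ — equivalently that $H^{1}(\pi,\mathfrak{sl}(3,\C)_{\mathrm{Ad}\,\rho_0})$ is $4$-dimensional and no larger. It is cleanest to run this symbolically over a rational parametrisation of $X_0$, which yields the generic statement directly, although a sufficiently general numerical point combined with semicontinuity also suffices; in either case one must also check that the chosen $\rho_0$ is irreducible and really of the form $\rho'\circ q$ with both $\rho'(a)$ and $\rho'(b)$ regular of order three.
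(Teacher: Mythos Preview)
Your overall architecture matches the paper's exactly: establish that $X_0$ is an irreducible Zariski-closed subvariety of $\chi_3(\pi)$ of dimension $4$, and then bound the Zariski tangent space of $\chi_3(\pi)$ at one well-chosen irreducible point of $X_0$ by $4$.  The differences are in how each half is carried out. For the lower bound and irreducibility, the paper quotes Lawton's description of $\chi_3(F_2)$ (Theorem~\ref{theo-lawton}) and identifies $X_0$ as the branched double cover of the $4$-plane $\{\tr A=\tr A^{-1}=\tr B=\tr B^{-1}=0\}$; you instead argue directly that the regular order-three locus is a single $6$-dimensional conjugacy class, so $X_0$ is the GIT image of $\mathcal C\times\mathcal C$ and hence irreducible of dimension $12-8=4$. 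Both arguments are valid; yours is more elementary, while Lawton's picture is what the paper later exploits to write down the tautological representation in Section~\ref{s:param}. The substantive divergence is the tangent-space computation. The paper does \emph{not} use Fox calculus on the four-commutator relator; it passes to the deformation variety built from the $4$-tetrahedron ideal triangulation of $W$, proves that $\rho_0$ has a unique flag decoration so that the holonomy map is locally a finite ramified cover, and then computes the kernel of an explicit $48\times48$ matrix $J$ (Table~\ref{table:J}) assembled from the internal relations and the SnapPy gluing equations. Your proposed route---build the $8\times16$ map $\Phi_r$ from $\partial r/\partial x,\ \partial r/\partial y$ and check $\operatorname{rank}\Phi_r=4$---computes the same $H^1(\pi,\mathfrak{sl}_3)$ and is perfectly legitimate; indeed the Euler-characteristic identity $\dim H^1=\dim H^2$ for the one-relator, two-generator group already forces your expected rank of $4$ once $\dim H^1=4$. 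The trade-off is that the paper's method is machine-generated and generalises mechanically to other triangulated cusped manifolds (the last sixteen rows of $J$ are literally SnapPy's Neumann--Zagier datum), at the cost of the extra decoration argument; your Fox-calculus approach is shorter and self-contained but requires differentiating the rather long relator $[x,y][x,y^{-1}][x^{-1},y^{-1}][x^{-1},y]$ by hand and carrying the adjoint of the chosen $\rho_0$ through it. Your final semicontinuity remark is harmless but unnecessary: as in the paper's proof, a single smooth point of $\chi_3(\pi)$ lying on the closed irreducible $4$-dimensional $X_0$ already forces the containing component to equal $X_0$.
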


The unfaithfulness part of Theorem \ref{thm:main} is straightforward once noted that all representation in $X_0$ factor 
through $\pi'$.

The $\SL(2,\C)$-character variety $\chi_2(\pi)$ for $\pi$ has been computed in 
\cite{BodenCurtis}. It has two components: one contains the characters of every irreducible representations ; the other 
component contains the characters of every reducible representations. Using the irreducible representation 
$\SL(2,\C) \to \SL(3,\C)$, we get a natural map $\chi_2(\pi) \to \chi_3(\pi)$. Our component $X_0$ of $\chi_3(\pi)$ is new, 
in the sense that it does not contain the image of any component of $\chi_2(\pi)$. Indeed, a generic point in either 
component of $\chi_2(\pi)$, as described in \cite{BodenCurtis}, does not correspond to a representation of $\pi'$.

It should be noted that the group $\pi'$ is the fundamental group of a compact exceptional Dehn filling of the 
Whitehead link complement, as we will see later on. The situation we describe is therefore very
similar to the one of the SL(3,$\C$) character variety of the figure $8$-knot group 
(see \cite[Proposition 10.3]{HMP} or \cite[Section 5.3]{FGKRT}). 
There, a $2$-dimensional component of the character variety is formed by 
representations factoring through a quotient of $\pi'$, which can be viewed as 
the fundamental group of a non-hyperbolic Dehn surgery on the figure $8$-knot complement. 
This quotient is isomorphic to an index 2 subgroup of the $(3,3,4)$-triangle group, which is in turn a 
quotient of $\pi'$. As such (see Proposition \ref{pr:quotient-inclusion}), the aforementioned $2$-dimensional 
component for the $8$-knot complement may be seen as a slice of the $4$-dimensional component for the Whitehead 
link complement described in Theorem \ref{thm:main}.

The proof of Theorem \ref{thm:main} has two steps.
\begin{itemize}
\item First we prove that $X_0$ is a closed Zariski subset in $\chi_3(\pi)$
 and has dimension at least $4$ (see Proposition \ref{pr:lower-bound}).
\item Secondly we consider the particular point of $X_0$ associated to
a representation $\rho_0$ and show that the dimension of the Zariski tangent space to 
$\chi_3(\pi)$ at this point is also $4$ (see Proposition 
\ref{pr:upper-bound}). As a consequence, the dimension of the complex algebraic
variety $X_0$ is at most $4$. The representation $\rho_0$ is defined in Section \ref{ss:upper-bound}.  It has 
been analysed geometrically in \cite{ParkWi2} and corresponds to a spherical CR uniformisation of the 
Whitehead link complement.
\end{itemize}

The main technical part in our work is the proof of Proposition 
\ref{pr:upper-bound}. We choose to prove this proposition using a method that
is not specific to the Whitehead link complement, and we believe that it could  be used to 
study further examples. It involves the so-called deformation variety as described in \cite{FGKRT}. 
The latter is an affine algebraic set, which is -- at least around
$[\rho_0]$ -- a ramified covering of the character variety. The purpose of 
shifting to the deformation variety is that it allows effective computations
via decorated representations and triangulations, as in \cite{FGKRT}.

In the last section, we describe an explicit family of pairwise unconjugate
representations of $\pi'$, whose conjugacy classes form a Zariski open 
subset of $X_0$. Our motivation for describing this generic set of 
actual matrices in the component $X_0$ is that this component should provide a nice 
playground for experimentations with the geometric structures we mentionned above. Having explicit 
expressions is very useful for that. This is called, in the works of Culler, Morgan and Shalen \cite{CS,MS}
a \textit{tautological representation} of $X_0$ \cite{CS,MS}. These representations are defined by pairs of regular 
order three matrices $(A,B)$ with no common eigenvector that are parametrised by the traces
of the four products $AB$, $A^{-1}B$, $A^{-1}B^{-1}$ and $AB^{-1}$. These parameters
are natural coordinates on $X_0$, in view of Lawton's description of the character variety
of the rank $2$ free group given in Theorem \ref{theo-lawton} (see \cite{Law}). 

\medskip

This paper is organised as follows. In Section \ref{s:hyp-geom}, we describe
the Whitehead link complement and its fundamental group from the perspective
of (real) hyperbolic geometry. In particular, we gather together classical
information on presentations and parabolic subgroups of $\pi$ that will be
needed further. Section \ref{s:char-var} is devoted to the character variety
 of $\pi$.
We provide basic definitions and facts on these objects. We prove Proposition 
\ref{pr:lower-bound}, state Proposition \ref{pr:upper-bound} and derive 
Theorem \ref{thm:main} from them. In Section \ref{s:defor}, 
we present the deformation variety and prove Proposition \ref{pr:upper-bound}. Eventually, in Section 
\ref{s:param}, we describe an explicit parametrisation of the representations in $X_0$.
The interested reader may also want to use the companion Sage notebook \cite{Notebook} which 
combines the use of SnapPy \cite{SnapPy} and SageMath \cite{sage} to illustrate our method.

\medskip

\noindent {\bf Aknowledgement:} We wish to thank Miguel Acosta, Martin Deraux, Elisha Falbel, Michael Heusener
and John Parker for numerous stimulating conversations. 
We thank Neil Hofmann, Craig Hodgson, Bruno Martelli and 
Carlo Petronio for kindly answering our questions.

\section{Hyperbolic geometry of the Whitehead Link Complement}\label{s:hyp-geom}

The Whitehead link is depicted on Figure \ref{fig-WL}.  We denote by $W$ its complement in $\mathbf S^3$ and by $\pi$ 
the fundamental group of $W$. 

\subsection{The ideal octahedron}\label{ss:octa}
It is a well-known fact that $W$ carries a (unique) complete hyperbolic structure 
(we refer to \cite{Thu} for more details). This structure can be explicitly described by considering an 
ideal regular octahedron in the real hyperbolic $3$-space ( see Figure \ref{fig-WL}), together  with identifications 
of its faces by isometries. We refer the reader to Section 3.3 of \cite{Thu}, or to Example 1 in \cite{Wiel} for details. 
We briefly recall here the description of this structure which is given in \cite{Wiel}. 

\begin{figure}[ht]
\begin{tabular}{lr}
\begin{minipage}[c]{.46\linewidth}
 \scalebox{0.6}{\includegraphics{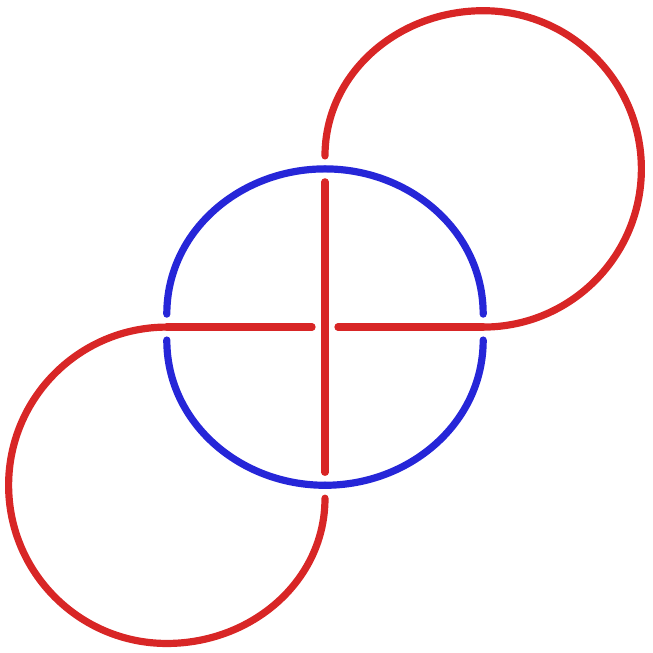}}
\end{minipage}
&\begin{minipage}[c]{.46\linewidth}
 \scalebox{0.4}{\includegraphics{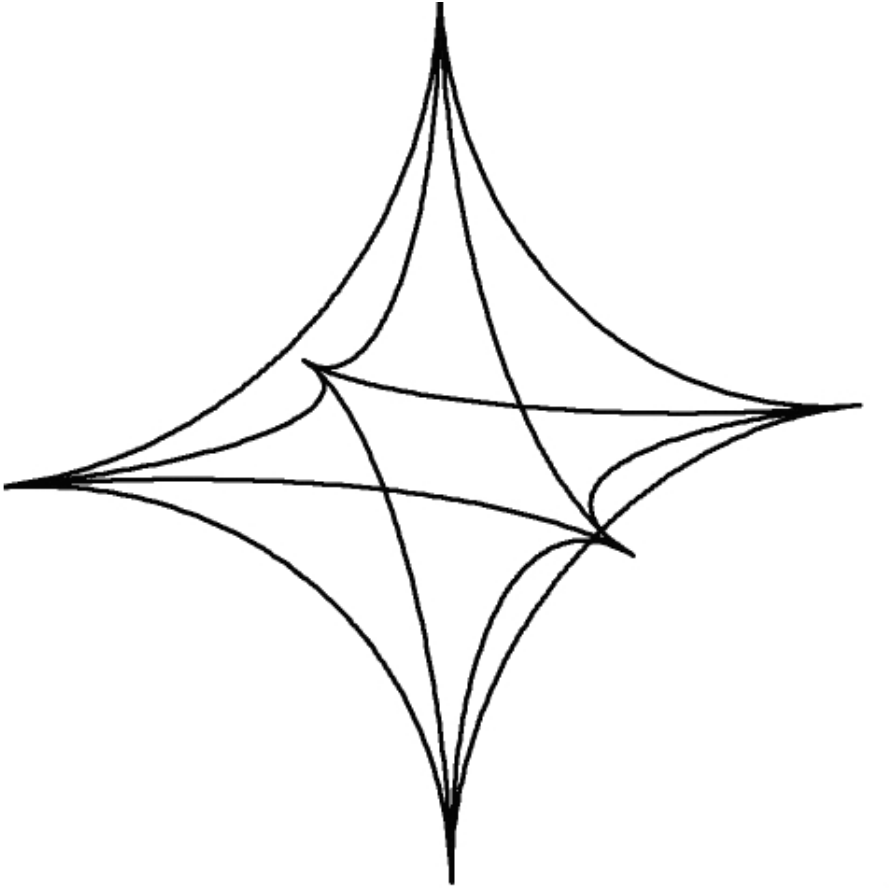}}
\end{minipage}
\end{tabular}
\caption{The Whitehead link, and a hyperbolic regular ideal octahedron. }\label{fig-WL}
\end{figure}

We denote by $\mathcal{O}$ the ideal octahedron of which vertices are given, in the upper half-space model of $\HtR$ by 
$$ \infty,\, 0,\, -1,\,-1+i,\, i,\, \dfrac{-1+i}{2}.$$
A flattened version of this octahedron is pictured in Figure \ref{fig-glue-octahedron}. We denote by $x$, $y$ and $t$ the 
isometries of $\HtR$ associated to the following elements of SL(2,$\C$).
\begin{equation}\label{xyt}
x=\begin{bmatrix} 1 & i \\ {\black 0} & 1\end{bmatrix},\, t=\begin{bmatrix}1 & 2 \\ {\black 0} & 1\end{bmatrix} {\rm and }\,
y=\begin{bmatrix} 1 & {\black 0} \\ -1-i & 1\end{bmatrix}.
\end{equation}

In Wielenberg's article, $x$ corresponds to $u$, $t$ to $t_2$ and $y$ to $w_1$. Let $\Gamma$ be the subgroup of PSL(2,$\C$) 
generated by $x$ and $y$.

Note that $t$ belongs to $\Gamma$ since $t=[y^{-1},x][y,x]x^2$. Now, we equip $\mathcal{O}$ with the face identifications 
described on Figure \ref{fig-glue-octahedron}. This particular choice gives a holonomy representation with image $\Gamma$, 
which is isomorphic to the fundamental group of the Whitehead link complement, and can be seen to have index twelve 
in the Bianchi group PSL(2,$\Z[i]$) (see \cite{Wiel}). The octahedron from Figure \ref{fig-glue-octahedron} is a fundamental 
domain for $\Gamma$, and applying Poincar\'e's polyhedron theorem leads to the following classical presentation for $\pi$.
\begin{equation}\label{present-pi-standard}
 \la x,y\, \vert\, [x,y][x,y^{-1}][x^{-1},y^{-1}][x^{-1},y]\ra.
\end{equation}
We refer the reader to chapter 11 of \cite{Rat} for an exposition of Poincar\'e's polyhedron theorem

\begin{figure}
 \begin{tabular}{lr}
\begin{minipage}[c]{.46\linewidth}
 \scalebox{0.35}{\includegraphics{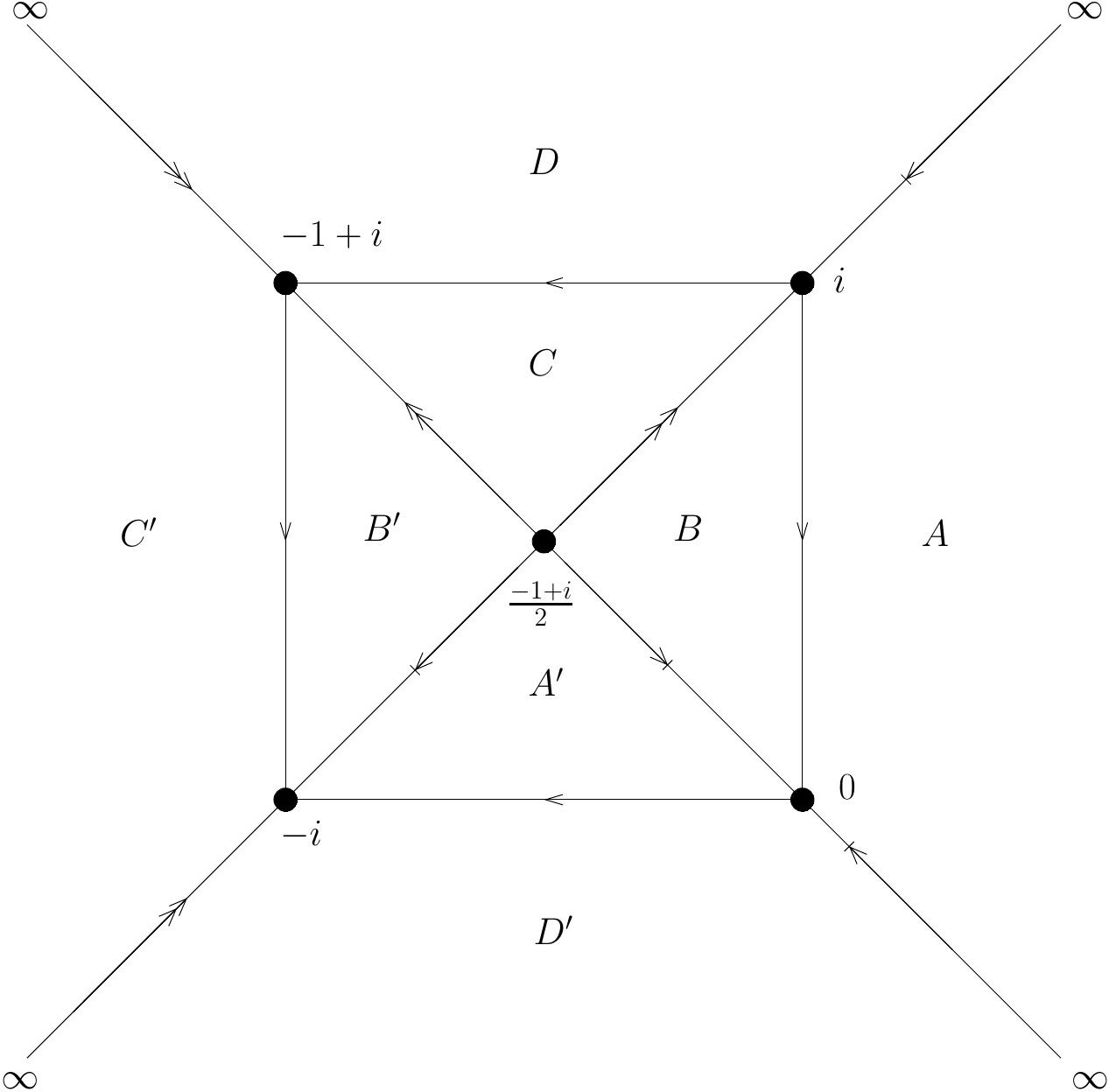}}
\end{minipage}
& \begin{minipage}[c]{.46\linewidth}
\begin{tabular}{ll}
 $\alpha=yx^{-1}$ &: $A \longmapsto A'$\\
&\\
 $\beta=yx^{-1}y^{-1}$ &: $B  \longmapsto B'$\\
&\\
 $\gamma=t^{-1}x y^{-1}$ &: $C \longmapsto C'$\\
&\\
 $\delta = x^{-1}$ &: $D  \longmapsto D'$,
\end{tabular}\\
\end{minipage}
\end{tabular}
\caption{ An octahedron with face identifications }\label{fig-glue-octahedron}
\end{figure}
 
\subsection{Triangulating the ideal octahedron}\label{ss:triangulation}
The presentation of the Whitehead link group that we are going to use is in fact not \eqref{present-pi-standard}, but
\begin{equation}\label{present-pi-SnapPy-2}
 \la a,b \vert [ba^{-3}b^2,a^{-1}b]\ra.
\end{equation}
An isomorphism between the two presentations is given by the changes of generators
\begin{equation}\label{isom-present}
 (x,y)=(ab^{-1},ab^{-1}a) \mbox{ and } (a,b)=(x^{-1}y,x^{-2}y).
\end{equation}
This isomorphism appears in \cite{ParkWi2} (Proposition 3.3), where it has a geometric meaning.
The presentation \eqref{present-pi-SnapPy-2} is the one provided by the software SnapPy \cite{SnapPy}. It is worth 
noting that the computation made by SnapPy is based on an ideal triangulation of the Whitehead link complement, which 
is going to be of use for us, and is as follows. Connect the vertices  with coordinates $\infty$ and $\frac{-1+i}{2}$ 
by an edge. One obtains a decomposition  of the octahedron $\mathcal{O}$ as a union of four ideal tetrahedra, which we 
label as follows (see Figure \ref{fig-glue-octahedron}).

\begin{equation}\label{simplices}
\begin{array}{ll}
\Delta_0 = \Bigl(i,0,\dfrac{-1+i}{2},\infty\Bigr) & \Delta_1 = \Bigl(-1 + i,i, \dfrac{-1+i}{2},\infty\Bigr) \\
&\\
\Delta_2 = \Bigl(-i,-1+i,\dfrac{-1+i}{2},\infty\Bigr)& \Delta_3 =  \Bigl(0,-i,\dfrac{-1+i}{2},\infty\Bigr)
\end{array}
\end{equation}




The geometric representation of $\pi$ is defined (up to PSL(2,$\C$)-conjugation) as the one corresponding to the unique finite 
volume hyperbolic structure on the Whitehead link complement. We denote it by $\rho_{\rm geom}$. It can be described in terms 
of either presentations \eqref{present-pi-standard} or \eqref{present-pi-SnapPy-2}. We now provide matrices for the 
images of $a$ and $b$ in the presentation \eqref{present-pi-SnapPy-2} that can be easily related to \eqref{present-pi-standard} 
by the isomorphism \eqref{isom-present}. 

\begin{prop}
 The geometric representation of $\pi$ is the morphism $\rho_{\rm geom} : \pi\longrightarrow {\rm PSL}(2,\C)$ defined by  
\begin{equation}\label{def-rho-geom}
 \rho_{\rm geom}(a)= x^{-1}y=\begin{bmatrix}i & -i \\ -1-i & 1\end{bmatrix}\mbox{ and }\rho_{\rm geom}(b)= x^{-2}y=\begin{bmatrix} -1 + 2i & -2i \\ -1-i & 1\end{bmatrix}.
\end{equation}
\end{prop}
\medskip
The geometric representation is thus a discrete and faithful representation of $\pi$ in PSL(2,$\C$) with image $\Gamma$.
Note that $\rho(b^{-1}a^3b^{-1}a^{-1})=t$. 

\subsection{Stabilizers of the vertices and peripheral curves}
There are two orbits of vertices modulo the identifications in the octahedron $\mathcal{O}$: the one of 
$\infty$ and the one of $0$. It is a simple exercise using the face identifications to verify that the stabilizers 
of these two points are respectively $\Gamma_\infty=\la x, t\ra$, and $\Gamma_0=\la y,xy^{-1}xt^{-1}x^{-1}y^{-1}x\ra$.
The second generator for $\Gamma_0$ is  the projective transformation associated to
$$\begin{bmatrix}
   1 & {\black 0} & \\ 2-2i & 1
  \end{bmatrix}.$$
We express now these stabilizers in terms of $a$ and $b$.
\begin{prop}\label{pr:stabil}
 The stabilizers of $0$ and $\infty$ in $\Gamma$ are the images of the two subgroups of $\pi$ 
respectively given by $\la ab^{-1}a,s_0\ra$ and $\la ab^{-1},s_\infty\ra$ by $\rho_{geom}$, where
$s_0=[a,b^{-1}]a^{-1}b^2a^{-3}[b,a]$ and $s_\infty=b^{-1}a^3b^{-1}a^{-1}$.
\end{prop}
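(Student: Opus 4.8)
The plan is to identify the two cusp subgroups $\Gamma_0$ and $\Gamma_\infty$, which are already known in terms of the generators $u, w_1, t_2$ of $\Gamma$, with explicit words in $a$ and $b$, and then to check the identification passes through $\rho_{geom}$. Since $\rho_{geom}$ is faithful, it suffices to work entirely in $\Gamma \subset \mathrm{PSL}(2,\C)$: one must verify the two \emph{group-element} identities
\[
\rho_{geom}(ab^{-1}) = u w_1^{-1}, \qquad \rho_{geom}(ab^{-1}a) = w_1,
\]
together with
\[
\rho_{geom}(s_\infty) = t_2, \qquad \rho_{geom}(s_0) = u w_1^{-1} u^{-1} t_2^{-1} u^{-1} w_1^{-1} u,
\]
where $s_\infty = b^{-1}a^3b^{-1}a^{-1}$ and $s_0 = [a,b^{-1}]a^{-1}b^2 a^{-3}[b,a]$. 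The first two are immediate from the definition \eqref{def-rho-geom}: $\rho_{geom}(a) = u^{-1}w_1$ and $\rho_{geom}(b) = u^{-2}w_1$, so $\rho_{geom}(ab^{-1}) = u^{-1}w_1 w_1^{-1} u^2 = u$ --- wait, one must be careful here. Let me recompute in the plan: $\rho_{geom}(ab^{-1}) = \rho_{geom}(a)\rho_{geom}(b)^{-1} = (u^{-1}w_1)(u^{-2}w_1)^{-1} = u^{-1}w_1 w_1^{-1}u^2 = u$. Hmm, that gives $u$, not $uw_1^{-1}$; in any case, whichever of $u$, $uw_1^{-1}$, or a conjugate thereof one lands on, it is a parabolic fixing $\infty$ (resp. $0$), and the point of the proposition is precisely to record the correct word --- so in the writeup I would simply carry out these four matrix multiplications explicitly and read off the answer, adjusting the statement's words if a typo has crept in. The remark preceding the proposition, $\rho(b^{-1}a^3b^{-1}a^{-1}) = t_2$, already disposes of $s_\infty$.

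The substantive step is the verification that $\rho_{geom}(s_0)$ equals the stated second generator of $\Gamma_0$, i.e.\ the parabolic associated to $\left[\begin{smallmatrix} 1 & 0 \\ 2-2i & 1\end{smallmatrix}\right]$. Here I would first rewrite $s_0$ using the isomorphism \eqref{isom-present}: under $(a,b) \leftrightarrow (x^{-1}y, x^{-2}y)$ one has $a \leftrightarrow u^{-1}w_1$, $b \leftrightarrow u^{-2}w_1$, and these match $\rho_{geom}$ by construction, so the computation reduces to substituting the $2\times 2$ matrices \eqref{uw1t2} and multiplying out the word $[u^{-1}w_1, (u^{-2}w_1)^{-1}]\,(u^{-1}w_1)^{-1}(u^{-2}w_1)^2(u^{-1}w_1)^{-3}\,[u^{-2}w_1, u^{-1}w_1]$. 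This is a moderately long product in $\mathrm{SL}(2,\C)$ with Gaussian-integer entries; it is exactly the kind of computation the companion notebook \cite{Notebook} automates, so in the paper I would either display the resulting matrix and assert the multiplication, or note that it is a routine check deferred to \cite{Notebook}.

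The main obstacle is purely bookkeeping: there is no conceptual difficulty, only the risk of sign or inversion errors in juggling the two presentations \eqref{present-pi-standard} and \eqref{present-pi-SnapPy} and the generator dictionaries \eqref{isom-present} and \eqref{def-rho-geom}. The one genuine point requiring care is the \emph{well-definedness}: $s_0$ and $s_\infty$ are words in $\pi$, so one should check they are fixed-point-free parabolics whose fixed points are $0$ and $\infty$ respectively (so that they indeed lie in $\Gamma_0$, $\Gamma_\infty$), and that together with the respective ``short'' translation generators $\rho_{geom}(ab^{-1}a)$, $\rho_{geom}(ab^{-1})$ they generate the \emph{full} rank-two parabolic subgroup --- this last part follows because the stated pair of matrices in each cusp spans the corresponding lattice of translations (after conjugating the fixed point to $\infty$), which one sees by computing the determinant of the two translation vectors and checking it is $\pm 1$ times the covolume already recorded for $\Gamma_\infty = \la u, t_2\ra$ and $\Gamma_0$. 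Once these two lattice-generation checks and the four element identities are in hand, faithfulness of $\rho_{geom}$ completes the proof.
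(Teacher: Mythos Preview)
Your approach is exactly the paper's (implicit) one: the stabilizers $\Gamma_\infty=\la u,t_2\ra$ and $\Gamma_0=\la w_1,\,uw_1^{-1}u^{-1}t_2^{-1}u^{-1}w_1^{-1}u\ra$ are recorded just before the proposition, and the proposition itself is stated without proof, the translation into $a,b$ being a direct matrix check. Your self-correction is the right answer --- $\rho_{geom}(ab^{-1})=u$ and hence $\rho_{geom}(ab^{-1}a)=u\cdot u^{-1}w_1=w_1$, so there is no typo in the statement; together with $\rho_{geom}(s_\infty)=t_2$ (already noted in the paper) and the remaining matrix product for $\rho_{geom}(s_0)$, these are precisely the four identities to verify, and your lattice-generation remark is in fact a bit more scrupulous than the paper, which simply records the generating pairs.
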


\begin{rem}\label{rem:long-mer}
 SnapPy provides the following generators for the first homology group of the boundary tori of the Whitehead link complement 
using the presentation \eqref{present-pi-SnapPy-2} ($m_i$ stands for meridian, and $l_i$ for longitude):
\begin{equation}
m_1=a^{-2}b,\, l_1=a^{-2}bab^{-2}ab,\, m_2=b^{-1}a,\, l_2= b^{-1}ab^{-1}aba^{-3}ba.
\end{equation}
By a direct computation, one verifies that 
\begin{eqnarray*}
 \rho_{\rm geom}(am_1a^{-1})=y & \rho_{\rm geom}(al_1a^{-1})=y^{-2}s_0 \\
\rho_{\rm geom}(am_2a^{-1})=x & \rho_{\rm geom}(al_2a^{-1})=t^{-1}x^2
\end{eqnarray*}
We see therefore that $m_1$ and $l_1$ correspond to the cusp of $W$  associated to (the orbit of) $0$, and that $m_2$ and $l_2$ 
correspond to the one associated to (the orbit of) $\infty$.
\end{rem}

\section{The SL(3,$\C$)-character variety}\label{s:char-var}

\subsection{Generalities}

\begin{defi}
Let $G$ be a finitely generated group. The {\it representation variety} of $G$ in SL(3,$\C$) is 
$${\rm Hom}(G,{\rm SL}(3,\C)).$$ 
Its GIT quotient
$${\rm Hom}(G,{\rm SL}(3,\C))//{\rm SL}(3,\C),$$

where the action is by conjugation of representations, is called the SL(3,$\C$)-{\it character variety}, denoted 
by $\chi_3(G)$.
\end{defi}

We refer the reader to \cite{Si2,Heu-survey} for classical definitions 
about representation and character varieties and associated objects.
A remark is important for our purposes: if $G'$ is a quotient of $G$, 
then there is a natural map:
$$\Hom(G',{\rm SL}(3,\C)) \hookrightarrow \Hom(G,{\rm SL}(3,\C)).$$
Indeed, a representation $\rho' : G' \to {\rm SL}(3,\C)$ is naturally 
promoted to a representation 
$$\rho : G \twoheadrightarrow G' \xrightarrow{\rho'} {\rm SL}(3,\C).$$
As the projection $G \twoheadrightarrow G'$ is surjective, this map
is injective and moreover two representation $\rho'$ and $\bar \rho'$
in $\Hom(G',{\rm SL}(3,\C))$ are conjugate if and only if their associated $\rho$ and
$\bar \rho$ in $\Hom(G,{\rm SL}(3,\C))$ are conjugate. From Proposition 1.7 in \cite{LubMag}, 
the map on the level of representation variety is a closed immersion covariant for the action 
of PGL(3,$\C$) by conjugation. From the discussion in page 15 of \cite{LubMag}, this map 
induces a closed immersion on the level of character varieties. We obtain therefore
\begin{prop}\label{pr:quotient-inclusion}
If $G'$ is a quotient of $G$, then $\chi_3(G') \subset \chi_3(G)$.
\end{prop}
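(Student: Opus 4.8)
The plan is to unpack the GIT quotient definitions and observe that the inclusion of representation varieties induced by the surjection $G \twoheadrightarrow G'$ is compatible with the $\SL(3,\C)$-action, hence descends to the quotients. First I would recall that $\Hom(G',\SL(3,\C))$ is a Zariski-closed $\SL(3,\C)$-invariant subvariety of $\Hom(G,\SL(3,\C))$: concretely, picking generators of $G$ and their images under $G \to G'$, the variety $\Hom(G',\SL(3,\C))$ is cut out inside $\Hom(G,\SL(3,\C))$ by the extra relations of $G'$, and the conjugation action of $\SL(3,\C)$ preserves this subvariety since it acts by the same formula on both. This is exactly the injection discussed just before the statement, together with the remark that it is equivariant.

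Next I would invoke the functoriality of the GIT quotient for a closed invariant subvariety $Z \subset Y$ acted on by a reductive group $H$: the inclusion $Z \hookrightarrow Y$ induces a morphism $Z /\!/ H \to Y /\!/ H$, and because $H$ is reductive the ring of invariants $\C[Z]^H$ is a quotient of $\C[Y]^H$ (restriction of invariant functions is surjective onto invariant functions on the closed subvariety), so this induced morphism is a closed immersion. Applying this with $H = \SL(3,\C)$ (which is reductive), $Y = \Hom(G,\SL(3,\C))$ and $Z = \Hom(G',\SL(3,\C))$ yields that $\chi_3(G') \to \chi_3(G)$ is a closed embedding, which is the assertion $\chi_3(G') \subset \chi_3(G)$.

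For the statement about conjugacy classes (which is implicit in writing "$\subset$" rather than just "maps to"), I would point to the sentence already in the excerpt: two representations $\rho'$, $\bar\rho'$ of $G'$ are $\SL(3,\C)$-conjugate if and only if the associated $\rho$, $\bar\rho$ of $G$ are conjugate. Combined with the standard fact that closed points of a GIT quotient of a representation variety correspond to closed orbits (equivalently, semisimple representations up to conjugacy) and that a representation of $G$ is semisimple if and only if the corresponding representation of $G'$ is, this shows the map on closed points is injective, consistent with the scheme-theoretic closed immersion above.

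The main obstacle, and the only genuinely delicate point, is the reductivity input: one must ensure that passing to invariants is exact enough that $\C[Y]^H \twoheadrightarrow \C[Z]^H$. This is where reductivity of $\SL(3,\C)$ is used — for a non-reductive group the induced map on quotients need not be a closed immersion. Everything else is bookkeeping: checking that the relation-imposing equations are $H$-invariant and that the identification of conjugacy classes is compatible with taking closed orbits. I would therefore structure the proof as (i) equivariant closed embedding of representation varieties, (ii) surjectivity on invariant rings by reductivity, (iii) conclude the closed immersion of character varieties, citing \cite{Si2,Heu-survey} for the GIT background.
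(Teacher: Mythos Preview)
Your proposal is correct and follows the same route as the paper: the equivariant injection $\Hom(G',\SL(3,\C))\hookrightarrow\Hom(G,\SL(3,\C))$ given by precomposing with $G\twoheadrightarrow G'$, together with the observation that conjugacy in the source and target correspond, is exactly the argument the paper gives in the paragraph preceding the proposition. The paper then simply states the proposition ``as a consequence of this discussion'' without further detail; your treatment is more careful in making explicit the reductivity of $\SL(3,\C)$ and the resulting surjectivity $\C[Y]^H\twoheadrightarrow\C[Z]^H$ needed to conclude that the induced map on GIT quotients is a closed immersion, which the paper leaves implicit.
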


\subsection{A quotient of $\pi$}

We denote by $\pi'$ the quotient of $\pi$ defined by the extra relations $a^3=b^3=1$. More precisely, 
a presentation for $\pi'$ is given by $\pi' = \la a,b \vert a^3, b^3, [ba^{-3}b^2,a^{-1}b]\ra$. 
Clearly, the group $\pi'$ is isomorphic to $\Z_3\star \Z_3$: the last relation is a 
consequence of the first two.  Moreover, SnapPy indicates that $\pi'$ is the fundamental group of a 
double Dehn surgery on the Whitehead link. We make this statement precise as follows.
\begin{prop}\label{prop-dehn}
The group $\pi'\simeq \Z_3\star \Z_3$ is isomorphic to the quotient of $\pi$ 
defined by the two relations $m_1^3l_1^{-1}$ and $m_2^3l_2^{-1}$.
\end{prop}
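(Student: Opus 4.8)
The plan is to exhibit the presentation $\pi' = \la a,b \mid a^3, b^3, [ba^{-3}b^2, a^{-1}b]\ra$ as a quotient of $\pi$ (using the SnapPy presentation \eqref{present-pi-SnapPy-2}) and to check that the two added relations are exactly $a^3$ and $b^3$ once one substitutes the SnapPy expressions for the meridians and longitudes given in Remark~\ref{rem:long-mer}. Concretely, I would start from the quotient $\pi'' := \pi / \langle\langle m_1^3 l_1^{-1},\, m_2^3 l_2^{-1}\rangle\rangle$, with $m_1 = a^{-2}b$, $l_1 = a^{-2}bab^{-2}ab$, $m_2 = b^{-1}a$, $l_2 = b^{-1}ab^{-1}aba^{-3}ba$, and show that in $\pi''$ one has $a^3 = 1$ and $b^3 = 1$; and conversely that, once $a^3 = b^3 = 1$, the two relators $m_1^3 l_1^{-1}$ and $m_2^3 l_2^{-1}$ become trivial. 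Since $[ba^{-3}b^2, a^{-1}b]$ is a consequence of $a^3 = b^3 = 1$ (as noted in the text: $a^{-3} = 1$ makes $ba^{-3}b^2 = b^3 = 1$, so the commutator is trivial), this will identify $\pi''$ with $\pi'$ and, via Proposition~\ref{prop-dehn}'s statement that $\pi' \simeq \Z_3 \star \Z_3$, complete the proof.

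The key computation is the conversion between the two sets of relations. First I would compute $m_2^3 l_2^{-1}$ in terms of $a,b$: with $m_2 = b^{-1}a$, we get $m_2^3 = b^{-1}ab^{-1}ab^{-1}a$, and $l_2^{-1} = a^{-1}b^{-1}a^3b^{-1}a^{-1}b a^{-1}b$, so $m_2^3 l_2^{-1} = b^{-1}ab^{-1}ab^{-1}a \cdot a^{-1}b^{-1}a^3b^{-1}a^{-1}ba^{-1}b = b^{-1}ab^{-1}a b^{-1} b^{-1} a^3 b^{-1}a^{-1}ba^{-1}b$. I would simplify this word and match it, modulo conjugation and modulo the relator \eqref{present-pi-SnapPy-2}, with a power of $b$ (or of $a$); the point is that adding $m_2^3 l_2^{-1}$ should be equivalent, in $\pi$, to adding $b^3$ (up to conjugacy). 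Similarly $m_1^3 l_1^{-1}$ with $m_1 = a^{-2}b$, $l_1 = a^{-2}bab^{-2}ab$ gives $m_1^3 l_1^{-1} = a^{-2}ba^{-2}ba^{-2}b \cdot b^{-1}a^{-1}b^2a^{-1}b^{-1}a^2$, which I expect to reduce (using the relator) to a conjugate of $a^{\pm 3}$ or $a^{\pm 6}$; I would have to be careful about whether one gets $a^3$ or $a^6$ (i.e. whether the filling slope is $(3,1)$ giving order exactly $3$, or something that only forces $a^6=1$), but since $\pi/\la\la a^3,b^3\ra\ra = \Z_3 \star \Z_3$ has $a$ of order exactly $3$, consistency of the double quotient forces the correct normalisation. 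I would also use the companion notebook \cite{Notebook} (SnapPy plus SageMath) to verify these word identities mechanically, and state that the verification is routine but tedious.

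The one genuine subtlety — and the main obstacle — is making the logical equivalence tight in both directions rather than just showing one inclusion of normal closures. It is easy to see that $a^3 = b^3 = 1$ kills the filling relators (since in $\Z_3 \star \Z_3$ both $m_i$ and $l_i$ become explicit words that one checks satisfy $m_i^3 = l_i$); the reverse direction — that the Dehn filling quotient already forces $a^3 = b^3 = 1$ — requires actually identifying $\pi''$ as a group, or at least exhibiting $a^3$ and $b^3$ as elements of $\langle\langle m_1^3 l_1^{-1}, m_2^3 l_2^{-1}\rangle\rangle$ inside $\pi$. The cleanest route is: (i) show $b^3 \in \langle\langle m_2^3 l_2^{-1}\rangle\rangle$ modulo the $\pi$-relator by the explicit word manipulation above, hence $b^3 = 1$ in $\pi''$; (ii) symmetrically (or via the word for $m_1^3 l_1^{-1}$) show $a^3 = 1$ in $\pi''$; (iii) conclude $\pi''$ is a quotient of $\Z_3 \star \Z_3$; (iv) observe the surjection $\pi \twoheadrightarrow \Z_3 \star \Z_3$ sending $a,b$ to the generators factors through $\pi''$ (because $m_i^3 l_i^{-1} \mapsto 1$), giving a surjection $\pi'' \twoheadrightarrow \Z_3 \star \Z_3$; (v) the two surjections $\Z_3\star\Z_3 \twoheadleftarrow \pi'' \twoheadrightarrow \Z_3 \star \Z_3$ are mutually inverse on generators, so all arrows are isomorphisms. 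Steps (i)–(ii) are the computational heart; step (iv) is the only place one needs that $m_i^3 = l_i$ holds in $\Z_3 \star \Z_3$, which is again a short check with $a^3 = b^3 = 1$. I would present (i)–(v) as the proof, relegating the explicit word reductions to a sentence with a pointer to \cite{Notebook}.
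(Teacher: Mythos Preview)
Your proposal is correct and follows essentially the same approach as the paper: the paper's proof is a single sentence stating that the result is ``a direct verification from the definition of $l_i$ and $m_i$ given in Remark~\ref{rem:long-mer}: the two conditions $m_1^3l_1^{-1}=m_2^3l_2^{-1}=1$ imply that $a^3=b^3=1$.'' Your plan is exactly this verification, just spelled out more carefully; in particular your step~(iv)--(v) argument (showing the surjection $\pi''\twoheadrightarrow \Z_3\star\Z_3$ and hence the isomorphism) makes explicit a point the paper leaves implicit.
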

The proof of Proposition \ref{prop-dehn} is a direct verification from the 
definition of $l_i$ and $m_i$ given in Remark \ref{rem:long-mer}: the two conditions
$m_1^3l_1^{-1}=m_2^3l_2^{-1}=1$ imply that $a^3=b^3=1$. In  terms of Dehn surgery, $\pi'$ is the fundamental 
group of the double Dehn surgery of slopes $(-3,-3)$ on the Whitehead link. This double Dehn surgery is not 
hyperbolic: this may be verified using SnapPy (see the companion Sage notebook to this paper 
\cite{Notebook}). More precisely, it can be seen to be the connected sum of two 
lens spaces (see \cite[Table 2]{MP}). 


\subsection{A lower bound for $\dim(X_0)$}

We are now going to describe the SL(3,$\C$)-character variety of $\pi'$. To this end, we use Lawton's theorem on the 
SL(3,$\C$)-character variety of the rank two free group $F_2$ \cite{Law}.

\begin{theo}[Lawton \cite{Law}]\label{theo-lawton}
 The map $\psi$ defined by
$$\begin{array}{ccl}
 \SL(3,\C)\times \SL(3,\C)& \rightarrow  &\C^8\\
(A,B)&\mapsto &(\tr A,\tr B, \tr AB,\tr A^{-1}B,\tr A^{-1},\tr B^{-1},\tr A^{-1}B^{-1},\tr AB^{-1})
\end{array}
$$
is onto $\C^8$ and descends to a (double) branched cover $\underline{\psi} : \chi_3(F_2)\longrightarrow\C^8$.
\end{theo}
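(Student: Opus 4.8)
The plan is to follow Lawton's original argument and explain the two parts of the statement separately: surjectivity of $\psi$, and the fact that it descends to a branched double cover on the GIT quotient.

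For surjectivity, I would first recall the classical fact (going back to work on trace identities for $\SL(3,\C)$) that for a single matrix $A\in\SL(3,\C)$ the characteristic polynomial is $\lambda^3-(\tr A)\lambda^2+(\tr A^{-1})\lambda-1$, so $\tr A$ and $\tr A^{-1}$ are algebraically independent and can be prescribed arbitrarily. Then, fixing generic $A$ in diagonal form $\mathrm{diag}(a_1,a_2,a_3)$ with the prescribed characteristic polynomial, I would write a general $B=(b_{ij})$ and compute the six remaining coordinates $\tr B,\tr B^{-1},\tr AB,\tr A^{-1}B,\tr AB^{-1},\tr A^{-1}B^{-1}$ as explicit polynomial (or Laurent-polynomial in the $a_i$) functions of the entries $b_{ij}$ and of $\det B=1$. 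The key computational point is that these six functions, for suitably chosen $B$ (e.g. taking $B$ with enough free parameters — a companion-type or a full $3\times3$ ansatz with the determinant constraint), can be inverted to hit any target in $\C^6$; concretely one checks that the Jacobian of the map from a six-dimensional slice of $\{B:\det B=1\}$ to $\C^6$ is generically nonzero, so the image is Zariski dense, and then upgrades density to surjectivity using properness of the fibers (the fibers of $\psi$ are conjugation orbits, which for generic parameters are closed) together with the fact that a dominant morphism of affine varieties with the right fiber dimension is onto over the locus where the target is smooth — here $\C^8$. This is the step I expect to be the main obstacle: organising the $B$-ansatz so the six trace functions are manifestly solvable, and handling the non-generic $A$ (repeated eigenvalues, or $A$ not diagonalisable) by a separate degeneration argument or by a dimension count showing the bad locus does not obstruct surjectivity.

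Next I would show $\psi$ is $\SL(3,\C)$-invariant for the diagonal conjugation action, since traces of words in $A,B,A^{-1},B^{-1}$ are conjugation invariants; hence $\psi$ factors through the categorical quotient as a morphism $\underline\psi:\chi_3(F_2)\to\C^8$. Because $\chi_3(F_2)$ is the affine variety whose coordinate ring is the invariant ring $\C[\Hom(F_2,\SL(3,\C))]^{\SL(3,\C)}$, giving $\underline\psi$ is the same as giving the subring generated by those eight traces; so $\underline\psi$ being finite/a branched cover amounts to showing this subring has the same fraction field up to a degree-$2$ extension, equivalently that the invariant ring is generated by the eight coordinates together with one more function (the "ninth trace" $\tr[A,B]$ or equivalently $\tr ABA^{-1}B^{-1}$) satisfying a monic degree-$2$ relation over $\C[\text{eight traces}]$. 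I would invoke the known minimal generating set of the $\SL(3,\C)$ trace algebra of two matrices: it is generated by nine explicit traces, the ninth being $t_9=\tr(ABA^{-1}B^{-1})$ (or $\tr[A,B]$), and $t_9$ satisfies a quadratic $t_9^2 - P\, t_9 + Q = 0$ with $P,Q\in\C[t_1,\dots,t_8]$ — this is exactly the structure that makes $\chi_3(F_2)$ a double branched cover of $\C^8$ via $\underline\psi$.

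Finally, to conclude "branched double cover": surjectivity of $\psi$ (hence of $\underline\psi$) was established in the first step; the generic fiber has exactly two points because over a generic point of $\C^8$ the ninth invariant $t_9$ takes the two roots of its quadratic, and swapping $A\leftrightarrow A^{-1},B\leftrightarrow B^{-1}$ simultaneously (or the transpose-inverse involution on the pair) realises the nontrivial deck transformation interchanging them while preserving all eight coordinates; the branch locus is the discriminant $\{P^2-4Q=0\}$. I would note that $\chi_3(F_2)$ is irreducible of dimension $8$ (it is $\dim\SL(3,\C)\cdot 2 - \dim\SL(3,\C) = 16-8$, the generic stabiliser being the centre), which matches a degree-$2$ cover of $\C^8$, so no extra components appear. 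The only subtlety to flag is that at reducible or otherwise non-stable representations the fiber of $\psi$ over $\C^8$ may jump, so "double cover" is asserted in the branched sense — away from a proper closed subset $\underline\psi$ is genuinely $2$-to-$1$ and étale.
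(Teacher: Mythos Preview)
The paper does not prove this theorem: it is quoted from Lawton \cite{Lawton} and only briefly elaborated upon, namely by recording that $\chi_3(F_2)$ is a hypersurface in $\C^9$ with the ninth coordinate $\tr[A,B]$ satisfying a monic quadratic $t_9^2-S\,t_9+P=0$ over the polynomial ring in the eight listed traces. So there is no ``paper's own proof'' to compare against; what you have written is a sketch of (a version of) Lawton's argument itself.

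Your outline is essentially the right shape, and the core observation---that the invariant ring is generated by the eight traces together with $\tr[A,B]$, the latter satisfying a quadratic over the first eight---is exactly what the paper records. Two points are worth tightening. First, your surjectivity argument is more laborious than necessary and the ``properness of fibers'' step is not quite right as stated. Once one knows the coordinate ring of $\chi_3(F_2)$ is $\C[t_1,\dots,t_8][t_9]/(t_9^2-St_9+P)$ with $S,P\in\C[t_1,\dots,t_8]$, surjectivity of $\underline\psi$ is immediate: for any value of $(t_1,\dots,t_8)$ the quadratic in $t_9$ has a root in $\C$. So the natural order is to establish the ring structure first and read off both surjectivity and the double-cover property from it, rather than proving surjectivity separately via a Jacobian/dominance argument. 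Second, your proposed deck transformation $(A,B)\mapsto(A^{-1},B^{-1})$ does \emph{not} fix the eight coordinates: it swaps $\tr A\leftrightarrow\tr A^{-1}$, $\tr AB\leftrightarrow\tr A^{-1}B^{-1}$, etc. The involution that fixes all eight trace coordinates is the transpose $(A,B)\mapsto(A^{T},B^{T})$ (equivalently, passing to the dual representation), under which $\tr[A,B]$ is sent to $\tr[B,A]=\tr([A,B]^{-1})$, the other root of the quadratic.
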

The theorem in Lawton's work is more precise and gives an explicit polynomial
 in $9$ variables defining $\chi_3(F_2)$ as a hypersurface in $\C^9$ covering 
$\C^8$. Namely, the above double cover corresponds to the fact that the traces of the nine words
$A$, $B$, $AB$, $A^{-1}B$, $A^{-1}$, $B^{-1}$, $A^{-1}B^{-1}$, $AB^{-1}$ and $[A,B]$ satisfy a relation of the form
\begin{equation}(\tr [A,B])^2-S \cdot \tr[A,B] +P =0,\label{trace-equation}\end{equation}
where $S$ and $P$ are polynomials in the traces of the first eight above words. In other words, once the traces of 
$A$, $B$, $AB$, $A^{-1}B$, $A^{-1}$, $B^{-1}$, $A^{-1}B^{-1}$, $AB^{-1}$ are fixed, the trace of $[A,B]$ is determined 
up to the choice of a root of \eqref{trace-equation}. We provide the precise values of $S$ and $P$ in the 
last section of the Sage notebook \cite{Notebook}, they may also be found in Lawton's \cite{Law}, or in \cite{Wi8}.

We can now give an alternate definition of the set $X_0$ considered in the 
introduction: 
\begin{defi}
Let $X_0\subset\chi_3(F_2)$ be the inverse image by $\underline{\psi}$ of the subspace $V$ of $\C^8$ given by 
$$V=\lbrace(0,0,z_1,z_2,0,0,z_3,z_4),z_i\in\C\rbrace.$$
\end{defi}

\noindent By Proposition \ref{pr:quotient-inclusion}, the sequence of quotients $F_2 \twoheadrightarrow \pi \twoheadrightarrow \pi'$ gives rise to a sequence of inclusions:
$$\chi_3(\pi')\subset \chi_3(\pi) \subset \chi_3(F_2).$$
With these inclusions in mind, we see that $X_0$ is actually included in $\chi_3(\pi')$.  We can even be more specific:
\begin{prop}\label{pr:lower-bound}
 The set $X_0$ is an irreducible Zariski closed subset of $\chi_3(\pi')$. Its dimension is at least $4$.
\end{prop}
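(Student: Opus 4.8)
The plan is to establish the two assertions of Proposition~\ref{pr:lower-bound} separately: that $X_0$ is irreducible and Zariski closed inside $\chi_3(\pi')$, and that $\dim X_0 \geq 4$.

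\textbf{Closedness and irreducibility.} First I would observe that $V$ is an affine linear subspace of $\C^8$, hence irreducible and Zariski closed, of dimension $4$. Since $\underline\psi : \chi_3(F_2)\to\C^8$ is a finite morphism (a double branched cover in Lawton's description), it is in particular closed and proper, so $X_0 = \underline\psi^{-1}(V)$ is Zariski closed in $\chi_3(F_2)$; and because $X_0\subset \chi_3(\pi')\subset\chi_3(F_2)$ with $\chi_3(\pi')$ closed in $\chi_3(F_2)$ (Proposition~\ref{pr:quotient-inclusion}), $X_0$ is also Zariski closed in $\chi_3(\pi')$. For irreducibility, the key point is that although $\underline\psi$ is a double cover, its restriction over $V$ does not split: I would use Lawton's explicit trace equation \eqref{trace-equation}, $(\tr[A,B])^2 - S\cdot\tr[A,B] + P = 0$, and show that when the eight traces are specialized to $(0,0,z_1,z_2,0,0,z_3,z_4)$ the resulting quadratic in $t=\tr[A,B]$ is irreducible over the function field $\C(z_1,z_2,z_3,z_4)$, i.e. that its discriminant $S^2-4P$ is not a square in $\C(z_1,\dots,z_4)$. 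Granting this, $X_0$ is the spectrum of $\C[z_1,\dots,z_4][t]/(t^2-St+P)$, a domain, hence irreducible; and it is visibly a degree-$2$ cover of the irreducible affine space $V\cong\C^4$, so in particular $\dim X_0 = \dim V = 4$. This simultaneously gives the dimension count and shows $X_0$ is in fact exactly $4$-dimensional, which is stronger than the "at least $4$" claimed.

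\textbf{Alternative route to the lower bound.} In case one prefers not to invoke the irreducibility of the discriminant, the bound $\dim X_0\geq 4$ can be obtained more cheaply: $\underline\psi|_{X_0}: X_0 \to V$ is a finite surjective morphism onto $V\cong\C^4$ (surjectivity because $\psi$ itself is onto $\C^8$ by Lawton's theorem, so every point of $V$ has a preimage in $\chi_3(F_2)$, which then lies in $X_0$ by definition), and a finite surjective morphism onto an irreducible variety of dimension $4$ forces the source to have dimension at least $4$ on some component; combined with the fact that $\chi_3(\pi')=\chi_3(\Z_3\star\Z_3)$ is irreducible (or at least that $X_0$ is contained in a single component), one gets $\dim X_0\geq 4$. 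I would still prefer the discriminant computation since it pins down $X_0$ as irreducible in one stroke.

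\textbf{Main obstacle.} The genuinely non-routine step is verifying that the quadratic $t^2 - S t + P$ stays irreducible after the specialization $(\tr A,\tr B,\tr A^{-1},\tr B^{-1}) = (0,0,0,0)$, equivalently that $S^2-4P$ restricted to $V$ is not a perfect square in $\C[z_1,z_2,z_3,z_4]$. This requires having the explicit polynomials $S$ and $P$ from Lawton's paper (or from the companion notebook \cite{Notebook}) in hand and doing a concrete but finite algebraic check — for instance, specializing further to generic numerical values of $z_2,z_3,z_4$ and checking that the resulting polynomial in $z_1$ has a non-square discriminant, or exhibiting a point of $V$ over which the fiber of $\underline\psi$ is a single reduced point of multiplicity preventing a global section. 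Everything else (finiteness and properness of $\underline\psi$, closedness of $\chi_3(\pi')$ in $\chi_3(F_2)$, surjectivity of $\psi$) is either standard GIT input or already recorded in the excerpt.
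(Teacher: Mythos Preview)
Your proposal is correct and follows essentially the same route as the paper: closedness from $X_0=\underline\psi^{-1}(V)$, irreducibility from the fact that the discriminant of Lawton's quadratic \eqref{trace-equation} does not become a square after specialising to $V$, and the dimension bound from $X_0$ being a double cover of $V\cong\C^4$. The only minor difference is that the paper opens its proof by explicitly checking the inclusion $X_0\subset\chi_3(\pi')$ via the characteristic polynomial identity $X^3-\tr(M)X^2+\tr(M^{-1})X-1$ (so that $\tr M=\tr M^{-1}=0$ forces $M^3=\mathrm{Id}$), whereas you take this inclusion as already established from the surrounding text; either reading is reasonable.
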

\begin{proof}
First, $X_0$ is included in $\chi_3(\pi')$. Indeed, the condition $\psi(A,B)\in V$ rewrites 
$$\tr A=\tr A^{-1} = \tr B =\tr B^{-1}=0$$
This implies that both $A$ and $B$ have order three. Indeed, the characteristic polynomial of a 
matrix $M\in\mbox{SL(3,$\C$)}$ is equal to $X^3-\tr M X^2+\tr M^{-1} X -1$ and thus if $\tr M=\tr M^{-1}=0$ we have $M^3=Id$.

By construction, $X_0$ is Zariski closed. Its irreducibility is not hard to verify using
the explicit form of the branched $2$-cover of Theorem \ref{theo-lawton} given
in \cite{Law}. For example, using the parametrisation given in Section \ref{s:param}, it is easily
seen that the double cover is indeed a branched one and not the union of two distinct sheets: the 
discriminant of the quadratic equation defining the double cover is not a square. For a direct and 
precise proof, see also \cite[Section 5.1]{Aco2}. The dimension estimate follows from the fact it is the 
pull-back of $\C^4$ by $\underline{\psi}$.
\end{proof}

\begin{rem}
 In fact $X_0$ is a component of $\chi_3(\pi')$ and the only one of positive dimension, as proved by Acosta in 
\cite{Aco2}.  Moreover it contains every irreducible representation of $\pi'$.
 The character variety $\chi_3(\pi')$, with a focus on real points,
  has been studied in details in \cite{Aco2}.
\end{rem}

\subsection{An upper bound for $\dim(X_0)$}\label{ss:upper-bound}

We give in this section an upper bound on the dimension of the component of
$\chi_3(\pi)$ containing $X_0$ by looking at a specific point to determine 
the Zariski tangent space. To this end, we consider the following two elements 
of $\SL(3,\C)$:

\begin{equation}\label{eq:defST}
S= \begin{pmatrix} 1 & \overline{q} & -1\\
-q & -1 & 0\\
-1 & {\black 0} & {\black 0} \end{pmatrix}
\quad \textrm{,  }\quad
T= \begin{pmatrix} 1 & -q & -1\\
\overline{q} & -1 & 0\\
-1 & {\black 0} & 0\\
 \end{pmatrix},{\rm where }~ q=\dfrac{\sqrt{3}+i\sqrt{5}}{2}.
\end{equation}

The matrices $S$ and $T$ have order three and satisfy 
$\tr(S)=\tr(S^{-1})=\tr(T)=\tr(T^{-1})=0.$
We define a point $[\rho_0]$ in the character variety of $\pi$ by setting:
\begin{equation}\label{rho0}
 \rho_0(a)=S,\, \rho_0(b)=T.
\end{equation}
By construction, $[\rho_0]$ belongs to $X_0$. The key step in the proof of Theorem \ref{thm:main} is 
the following

\begin{prop}\label{pr:upper-bound}
The Zariski tangent space to $\chi_3(\pi)$ at $[\rho_0]$ has dimension $4$.
\end{prop}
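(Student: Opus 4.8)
The plan is to compute the Zariski tangent space to $\chi_3(\pi)$ at $[\rho_0]$ via group cohomology: by the standard infinitesimal deformation theory, $T_{[\rho_0]}\chi_3(\pi)$ is naturally identified with $H^1(\pi,\mathfrak{sl}_3)$, where $\pi$ acts on $\mathfrak{sl}_3=\mathfrak{sl}(3,\C)$ through $\mathrm{Ad}\circ\rho_0$ — provided $\rho_0$ is a smooth point of the representation variety, or more conservatively, provided the scheme-theoretic tangent space at $[\rho_0]$ matches $H^1$ (one should first check that $\rho_0$ is a good/irreducible representation so that the GIT quotient is well-behaved near $[\rho_0]$; irreducibility of $\rho_0$ follows because $S,T$ have no common eigenvector, which is a short explicit linear-algebra check on the given matrices). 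So the goal reduces to showing $\dim_{\C} H^1(\pi,\mathfrak{sl}_3)=4$ for this action.

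To get the lower bound $\geq 4$ for free, note that $[\rho_0]\in X_0$ and $X_0$ has dimension $\geq 4$ by Proposition \ref{pr:lower-bound}; so the content is the upper bound $\dim H^1 \leq 4$. First I would set up the cochain complex from the two-generator one-relator presentation \eqref{present-pi-SnapPy-2}, $\pi=\la a,b \mid r\ra$ with $r=[ba^{-3}b^2,a^{-1}b]$: then $Z^1(\pi,\mathfrak{sl}_3)\subset \mathfrak{sl}_3\oplus\mathfrak{sl}_3$ is cut out by the single Fox-derivative relation $\partial r/\partial a \cdot u_a + \partial r/\partial b \cdot u_b = 0$ evaluated under $\mathrm{Ad}\,\rho_0$, i.e. by $8$ linear equations (one copy of $\mathfrak{sl}_3$) on the $16$-dimensional space of $1$-cocycle candidates $(u_a,u_b)$. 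Generically one expects $\dim Z^1 = 16-8 = 8$, and $\dim B^1 = \dim \mathfrak{sl}_3 - \dim H^0 = 8 - 0 = 8$ since $\rho_0$ is irreducible (so $H^0=(\mathfrak{sl}_3)^{\pi}=0$), which would give $H^1=0$ — but that is too small, so in fact the rank of the Fox-derivative map must drop by exactly $4$ on the nose. Concretely, I would compute the $8\times 16$ matrix $M(\rho_0)$ of the linearized relator with the explicit matrices $S,T$ (the entries live in $\Q[\sqrt{3},\sqrt{-5}]$, so this is an exact finite computation, ideally done in the companion Sage notebook \cite{Notebook}) and verify $\mathrm{rank}\,M(\rho_0)=4$; equivalently $\dim Z^1 = 12$, hence $\dim H^1 = 12 - 8 = 4$.

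The genuinely hard part — and the reason the paper instead routes through the deformation variety as announced in the introduction — is that working directly with the relator $r$ and the nine-variable Lawton hypersurface is computationally heavy and it is not transparent a priori that the scheme-theoretic tangent space to $\chi_3(\pi)$ (as opposed to $\chi_3(F_2)$) at $[\rho_0]$ is captured correctly by this naive $H^1$ computation without worrying about the branching of $\underline{\psi}$ and about $[\rho_0]$ lying over a possibly singular point of $\C^8$. So the cleaner route I would actually take, matching the paper's stated strategy, is: (1) pass to the deformation variety $\mathcal{D}$ of \cite{FGKRT} associated to the four-tetrahedron ideal triangulation of $W$, which near $[\rho_0]$ is a ramified cover of $\chi_3(\pi)$; (2) lift $\rho_0$ to a decorated representation $\widetilde{\rho_0}$, write down the gluing-and-edge equations of \cite{FGKRT} explicitly in the tetrahedron parameters; (3) compute the Jacobian of these equations at $\widetilde{\rho_0}$ and show its corank is $4$, which bounds $\dim T_{\widetilde{\rho_0}}\mathcal{D}\leq 4$ and hence, since the covering map is a local submersion-up-to-ramification with finite fibers, $\dim T_{[\rho_0]}\chi_3(\pi)\leq 4$; (4) combine with the lower bound from Proposition \ref{pr:lower-bound} to conclude equality, so that $X_0$ is exactly $4$-dimensional, $[\rho_0]$ is a smooth point, and Theorem \ref{thm:main} follows. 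The main obstacle is step (3): one must be careful that the deformation variety's tangent space genuinely surjects onto (a finite cover of) that of the character variety near this specific point — i.e. that no tangent directions are lost in the triangulation picture — and that the explicit Jacobian rank computation, though finite and exact over $\Q[\sqrt{3},\sqrt{-5}]$, is organized so the corank-$4$ statement is checkable and not merely numerical.
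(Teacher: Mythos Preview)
Your second route --- lifting to the deformation variety, computing the Jacobian of the gluing equations at the lift of $\rho_0$, and reading off a $4$-dimensional kernel --- is exactly what the paper does (Section \ref{s:defor}): a $48\times 48$ matrix $J$ over $\mathbb Q[i,\sqrt3,\sqrt5]$ is assembled from the internal relations and Table~\ref{GE}, and an exact rank computation gives $\dim\ker J=4$; the passage back to $\chi_3(\pi)$ uses the unique-decoration argument (Proposition \ref{pr:unique-deco}) to ensure the holonomy map is a finite ramified cover near $[\rho_0]$.

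Your first route, via $H^1(\pi,\mathfrak{sl}_3)$ and Fox derivatives of the single relator, is a genuinely different and perfectly valid approach: for an irreducible $\rho_0$ the Zariski tangent space to $\chi_3(\pi)$ at $[\rho_0]$ \emph{is} $H^1(\pi,\mathfrak{sl}_3)$, and this identification is intrinsic to $\pi$ --- it does not involve $\underline\psi$, $\chi_3(F_2)$, or Lawton's hypersurface at all. So your stated reason for abandoning it (worries about branching of $\underline\psi$ or singular fibers over $\C^8$) is misplaced; the only thing to do is the honest $8\times 16$ rank computation over $\mathbb Q(\sqrt3,\sqrt{-5})$, which is in fact lighter than the paper's $48\times 48$ one. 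The paper's motivation for the longer route is different from what you guess: they say explicitly that the deformation-variety method is chosen because it is \emph{not specific} to the Whitehead link, plugs directly into SnapPy's \texttt{gluing\_equations\_pgl}, and can be reused for other cusped manifolds, whereas the Fox-derivative computation is tied to the particular one-relator presentation.
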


We postpone the proof of Proposition \ref{pr:upper-bound} to the next section and proceed with the proof of Theorem \ref{thm:main}. 
Recall that we need to prove that $X_0$ is an algebraic component of $\chi_3(\pi)$ containing $[\rho_0]$.

\begin{proof}[Proof of Theorem \ref{thm:main}]
Let $X$ be an algebraic component of $\chi_3(\pi)$ containing $X_0$. The class $[\rho_0]$ belongs to $X$. By Proposition \ref{pr:upper-bound}, the dimension of $X$ is at most $4$: 
it is bounded above by the dimension of any Zariski tangent space. But, by 
Proposition \ref{pr:lower-bound}, $X_0\subset X$ is a Zariski closed subset of dimension $4$. Hence, $X=X_0$ and the theorem is proved.
\end{proof}

\section{Decorated representations and the deformation variety}\label{s:defor}
We are going to compute the dimension of the Zariski tangent space of $\chi_3(\pi)$ at $[\rho_0]$, in order to prove 
Proposition \ref{pr:upper-bound}. To this end, we will use a variation of the character variety -- called 
{\it decorated character variety} -- and a specific set of coordinates on it -- the {\it deformation variety}. 
The deformation variety is well-adapted to explicit computations. The equations defining
this variety may be reconstructed using SnapPy's command \texttt{gluing\_equations\_pgl} \cite{SnapPy}. 

The tools hereafter presented are suitable for character varieties with target  group the 
quotient $\PGL(3,\C)$ rather than $\SL(3,\C)$. This will not be
a problem, as we use these tools for computing local dimension around a point
which belongs to both character varieties. Indeed, if $\rho$ is a
representation of $\pi$ in $\SL(3,\C)$, the local dimensions at $[\rho]$ 
of the character varieties for $\SL(3,\C)$ and $\PGL(3,\C)$ are the same. 
As we explain afterwards, around a sufficiently generic representation the decorated representation  
variety is a ramified coveringof the character variety. So they share the same local dimension and 
computations can effectively be done at the level of the deformation variety.

\subsection{Decorated representations}

We first recall basic definitions. More details can be found in \cite{BFGKR}. 
\begin{defi}
 A \textit{flag} of $\PP(\C^3)$ is a pair $([x],[f])$ in  $\PP(\C^3)\times \PP((\C^3)^\vee)$ such that $f(x)=0$. We denote by 
$\mathcal{F}l_3$ the set of flags of $\PP(\C^3)$.
\end{defi}
Geometrically a flag is a pair formed by a point in  $\PP(\C^3)$ and a projective line containing it.

\begin{defi}
Let $\Gamma$ be the fundamental group of a finite volume, cusped hyperbolic manifold $M$, let $\mathcal{P}\subset\HtR$ be the 
set of parabolic fixed points of $\Gamma$ and let $\rho$ be a representation 
$\rho : \Gamma \longrightarrow {\rm PGL}(3,\C)$. A 
{\it decoration} of $\rho$ is a map $\phi : \mathcal{P} \longrightarrow \mathcal{F}l_3$
 which is ($\Gamma,\rho$)-equivariant. 
A pair $(\rho,\phi)$ is called a {\it decorated representation}.
\end{defi}

\begin{defi}
 The {\it decorated representation variety} is 
$${\rm DecHom}(\Gamma) = \lbrace (\rho,\phi), \rho\in{\rm Hom}(\Gamma,{\rm PGL}(3,\C)),\phi \mbox{ is a decoration of }\rho \rbrace.$$
The {\it decorated character variety} is the GIT quotient 
$${\rm Dec}\chi_3={\rm DecHom(\Gamma)}//{\rm PGL}(3,\C) $$
\end{defi}
 The precise links between the different versions of representation and character varieties
are described in detail in the introduction  of \cite{FGKRT}. It should be noted that for a given 
``generic'' representation $\rho : \Gamma \rightarrow \mbox{PGL(3,$\C$)}$, there exists only a finite number of 
possible decorations. Let us explain what we mean here by generic. The set of elements in SL(3,$\C$) that preserve 
only a finite number of flags is Zariski open. We call these elements generic. Now a representation will be called generic 
whenever the image of any peripheral subgroup contains at least one generic element. In this case, if $p\in\mathcal{P}$ is 
the (global) fixed point of a peripheral subgroup $\Gamma_p$, it should be mapped by $\phi$ to a flag that is invariant for 
$\rho(\Gamma_p)$. By genericity there is only a finite number of possible such flags.  By equivariance, the map $\phi$ 
is completely determined by its values on a choice of representatives of the orbits of $\Gamma$ on $\mathcal{P}$. As there 
is a finite number of cusps, the number of possible $\phi$ for a given $\rho$ is finite. Non-generic representation classes 
form a Zariski closed subset of the character variety, which may contain some components.

For our purposes, we chose the point $[\rho_0]$ in $\chi_3(\pi)$ ($\rho_0$ is described in Section \ref{ss:upper-bound}).

\begin{prop}\label{pr:unique-deco}
 The representation $\rho_0$ admits a unique decoration.
\end{prop}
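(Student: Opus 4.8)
The plan is to exploit the fact that a decoration of $\rho_0$ is, by equivariance, completely determined by the choice of a $\rho_0(\gamma)$-invariant flag at one parabolic fixed point per cusp. There are two cusps, and by Remark \ref{rem:long-mer} and Proposition \ref{pr:stabil} the corresponding peripheral subgroups are generated (after conjugation) by the pairs $(w_1, s_0)$ and $(u, t_2)$ — equivalently in terms of $a$ and $b$, by $\rho_0(ab^{-1}a)$ together with $\rho_0(s_0)$, and by $\rho_0(ab^{-1})$ together with $\rho_0(s_\infty)$. So the first step is to write down, for each cusp, the image under $\rho_0$ of a generator of that peripheral $\Z^2$, as an explicit element of $\SL(3,\C)$ using the matrices $S$ and $T$ of Section \ref{ss:upper-bound}.

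Next I would analyse, for each cusp, the set of flags invariant under the peripheral image. A flag $([x],[f])$ is invariant under a subgroup $H<\SL(3,\C)$ iff $[x]$ is a common eigenline of $H$ and $[f]$ is a common eigenline of the transpose-inverse action of $H$, with the incidence condition $f(x)=0$. The key point is that $\rho_0$ is irreducible (it is generated by two regular order-three elements with no common eigenvector — indeed this is precisely what puts $[\rho_0]$ in the irreducible locus of $X_0$, cf. the last remark of Section \ref{s:param}), and the peripheral image is a nontrivial abelian subgroup. A nontrivial abelian subgroup of $\SL(3,\C)$ that is not contained in a proper parabolic subgroup would force a common eigenvector structure; I would check that each peripheral image here is in fact generated by a \emph{nontrivial} element — in particular that $\rho_0(s_0)$ and $\rho_0(s_\infty)$ are nontrivial and that the peripheral subgroup at each cusp is a rank-two abelian group acting with a one-dimensional space of invariant flags. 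Concretely: compute the eigenvalues of the peripheral generators; if one generator is regular (distinct eigenvalues) it has exactly three invariant points and three invariant lines, giving at most finitely many candidate flags, and the incidence and second-generator conditions then cut this down; if the generators are non-regular one argues directly with the common eigenspaces. In all cases one verifies there is exactly one invariant flag at each cusp.

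The main obstacle is the bookkeeping: one must be careful that "$\rho_0$ admits a unique decoration" is a statement about $\phi$ as a $\Gamma$-equivariant map on \emph{all} of $\mathcal P$, not just about invariant flags at two chosen points. Once one fixes an invariant flag at a chosen representative $p_j$ of each cusp orbit, equivariance forces the value of $\phi$ at every other point of that orbit, and consistency is automatic because the stabiliser of $p_j$ maps $p_j$ to itself and the chosen flag is stabiliser-invariant by construction. So the genuine content reduces to the finite computation above, namely that each peripheral image has a unique invariant flag. I expect the eigenvalue computation for the peripheral generators to be the only place where the explicit numbers $\frac{\sqrt 3 \pm i\sqrt 5}{2}$ matter, and the cleanest route is probably to diagonalise (or put in Jordan form) one peripheral generator per cusp and read off that the commuting second generator does not preserve more than one of its flags. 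This can also be checked mechanically in the companion notebook \cite{Notebook}, which I would cite as a verification of the explicit step.
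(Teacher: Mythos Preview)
Your reduction to choosing one invariant flag per cusp orbit and extending by equivariance is exactly the paper's first step. Where your plan diverges from the paper --- and where it would stall as written --- is in the analysis of the peripheral images.

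You expect each peripheral image to be a rank-two abelian group, and you propose to use the \emph{second} generator to cut down the finite set of flags invariant under the first. In fact the peripheral images here collapse to \emph{cyclic} groups: since $S^3=T^3=1$, a direct check gives $\rho_0(s_\infty)=TS^{-1}=(ST^{-1})^{-1}$ and $\rho_0(s_0)=ST^{-1}S$, so the stabiliser of $\infty$ has image $\langle ST^{-1}\rangle$ and the stabiliser of $0$ has image $\langle ST^{-1}S\rangle$. The second generator therefore imposes no additional constraint, and your ``cut down by the commuting second generator'' step is vacuous. If the single generator happened to be regular semisimple, it would fix six flags and you would be stuck.

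The paper's key observation, which replaces your missing step, is that $ST^{-1}$ and $ST^{-1}S$ are \emph{regular unipotent} (a single Jordan block with eigenvalue $1$). Such an element of $\SL(3,\C)$ fixes exactly one complete flag, namely $(\ker(g-1)\subset\ker(g-1)^2)$. That single fact gives uniqueness at each cusp immediately, with no case analysis and no appeal to a second generator. Your outline would recover this once you actually compute the eigenvalues (all equal to $1$) and the Jordan type, but as written it anticipates the wrong mechanism for uniqueness.
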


\begin{proof}
 The (hyperbolic) Whitehead link complement has two cusps, which are represented by the stabilizers of $0$ and $\infty$ 
(see Proposition \ref{pr:stabil}). Therefore, the equivariance property implies that a decoration $\phi$ of $\rho_0$ 
is completely determined by the images $\phi(0)$ and $\phi(\infty)$. The images by $\rho_0$ of the stabilizers of $0$ and 
$\infty$ are respectively the cyclic groups $\la ST^{-1}S\ra$ and $\la ST^{-1}\ra$. Indeed, the images of 
the stabilizers of $0$ and $\infty$ by $\rho_0$ are respectively  $\la\rho_0(ab^{-1}a),\rho_0(s_0)\ra$ and 
$\la\rho_0(ab^{-1}),\rho_0(s_\infty)\ra$ (this follows directly from Proposition \ref{pr:stabil}). The images by $\rho_0$ 
of $s_0$ and $s_\infty$ are $ST^{-1}S$  and $TS^{-1}$ : this is a direct verification using $S^3=T^3=1$.

Now, the two maps $ST^{-1}$ and $ST^{-1}S$ are regular unipotent : this means that they are unipotent and that the 
eigenspace for the eigenvalue $1$ is a line. Thus each of them has only one invariant flag. Therefore $\rho_0$ can 
only be decorated in one way : $\phi$ must map $0$ to the invariant flag of $ST^{-1}S$ and $\infty$ to the one of $ST^{-1}$.
\end{proof}

The invariant flags of $ST^{-1}$ and $ST^{-1}S$ (as well as those of various elements in the group) are made explicit in Table 
\ref{flags}. A consequence of Proposition \ref{pr:unique-deco} is that around $[\rho_0]$, the decorated character variety 
is a finite ramified cover of the character variety (see also \cite{Guilloux-discretudePGLn}). As a 
consequence, the local dimension around $[\rho_0]$ can be equivalently computed at the level
of $\chi_3(\pi)$ or of the decorated character variety. 

\subsection{Using a triangulation: the deformation variety\label{ss:defor}}
A configuration of ordered points in a projective space $\PP(V)$ is said to be in general position when they are all distinct 
and no three points are contained in the same line. This notion applies to configurations of projective lines by duality.
A configuration of $n$ flags $(([x_1],[f_1],\cdots,([x_n],[f_n])))$ is in {\it general position} whenever the $n$ points 
$([x_i])_{i=1}^n$ are in general position and the forms $[f_j]$ satisfy $f_j(x_i)\neq 0$ when $i\neq j$.
\begin{defi}
 We call {\it tetrahedon of flags} in $\mathbb P(\C^3)$ any 4-tuple of flags in general position.
\end{defi}
We briefly recall now the definitions of the main projective invariants we are going to use as well as the relations among them.
We refer the reader to \cite{BFG} for more details. Let $T = (F_1, F_2, F_3, F_4)$ be a tetrahedron of flags.
\begin{enumerate}
 \item {\bf Triple ratio. }Let $\bigl(ijk\bigl)$ be a face of T (oriented 
  accordingly to the orientation of the tetrahedron) of flags in general 
  position. Its {\it triple ratio} is the quantity
 \begin{equation}\label{def-triple}
  z_{ijk}=\dfrac{f_i(x_j)f_j(x_k)f_k(x_i)}{f_i(x_k)f_j(x_i)f_k(x_j)}.
 \end{equation}
\item {\bf Cross-ratio. }  Whenever four points $(a,b,c,d)$ lie on a projective line, we denote by $[a,b,c,d]\in\C$ their 
cross-ratio. For each oriented edge $(ij)$ of $T$, we define $k$ and $l$ in such a way that the permutation 
$(1,2,3,4)\longmapsto (i,j,k,l)$ is even. Now, viewing the set of lines through $[x_i]$ as a projective line, we associate 
to $(ij)$ the cross-ratio 
\begin{equation}\label{def-zij}
 z_{ij}=\bigl[\ker(f_i),(x_ix_j),(x_ix_k),(x_ix_l)\bigr],
\end{equation}
where $(xy)$ denotes the projective line through $[x]$ and $[y]$.
\end{enumerate}

These invariants can be thought of as decorating tetrahedra as shown in Figure \ref{fig:coord} : to each face is associated a 
triple ratio, and to each edge are associated a pair of cross-ratios. Namely, the two cross ratios $z_{ij}$ and $z_{ji}$ 
are associated to the edge $(ij)$.

\begin{figure}[ht]      
\begin{center}
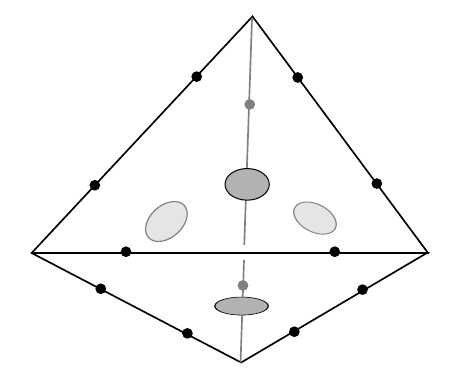
\caption{The $z$-coordinates for a tetrahedron.} \label{fig:coord}
\end{center}
\end{figure}

The above projective invariants are linked by the following internal relations:
\begin{equation*}
z_{ijk}=\dfrac{1}{z_{ikj}},\, z_{ijk}=-z_{il}z_{jl}z_{kl}
\end{equation*}
\begin{equation}
\tag{IR} z_{ik}=\dfrac{1}{1-z_{ij}},\,z_{ij}z_{ik}z_{il}=-1.\label{IR}
\end{equation}
In particular, the triple ratio can be expressed purely in terms of cross ratios.  These invariants 
can be used to parametrise the set of tetrahedra of flags :
\begin{prop}[Proposition 2.10 of \cite{BFG}]
A tetrahedron of flags is uniquely determined up to the action of $\PGL(3,\C)$ by the $4$-tuple $(z_{12},z_{21},z_{34},z_{43})$ in 
$(\C\setminus\{0,1\})^4$. 
\end{prop}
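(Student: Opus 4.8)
The plan is to reduce the statement to four independent one-dimensional cross-ratio computations carried out after normalizing the four underlying points. As a sanity check on the count: the space of flags has dimension $3$, so a tetrahedron of flags varies in a space of dimension $12$, and the $8$-dimensional group $\PGL(3,\C)$ acts with trivial stabilizer on configurations of points in general position, so the orbit space has dimension $4$, matching the number of parameters.

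First I would normalize the points. The four points $[x_1],\dots,[x_4]$ underlying a tetrahedron of flags are in general position, no three being collinear, hence form a projective frame of $\PP(\C^3)$; since $\PGL(3,\C)$ acts simply transitively on projective frames, there is a unique element of $\PGL(3,\C)$ carrying $([x_1],[x_2],[x_3],[x_4])$ to the standard frame $([1{:}0{:}0],[0{:}1{:}0],[0{:}0{:}1],[1{:}1{:}1])$. Thus each $\PGL(3,\C)$-orbit of tetrahedra of flags has a unique representative whose points are these four, and it remains to recover the four flag lines $\ell_i:=\ker f_i$ from the quadruple $(z_{12},z_{21},z_{34},z_{43})$, and conversely to prescribe them subject only to $z\in(\C\setminus\{0,1\})^4$.

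The key observation is that $\ell_i$ is a line through $[x_i]$, i.e.\ a point of the pencil of lines through $[x_i]$, a copy of $\PP^1$ in which the three lines $(x_ix_j)$, $(x_ix_k)$, $(x_ix_l)$ (with $\{j,k,l\}=\{1,2,3,4\}\setminus\{i\}$) are three fixed points, distinct precisely because no three of the $[x_m]$ are collinear. By definition $z_{ij}$ is the cross-ratio of $\ell_i$ against these three fixed lines; as the cross-ratio of four points of $\PP^1$ with three of them fixed and distinct is a projective isomorphism in the remaining variable, $z_{ij}$ determines $\ell_i$. Moreover $z_{ij}\in\C\setminus\{0,1\}=\PP^1\setminus\{0,1,\infty\}$ exactly when $\ell_i$ differs from each of $(x_ix_j)$, $(x_ix_k)$, $(x_ix_l)$, that is, when $[x_j],[x_k],[x_l]\notin\ell_i$, which is precisely the general-position requirement $f_i(x_j),f_i(x_k),f_i(x_l)\neq 0$ for the flag $F_i$. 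Since, by the parity convention fixing $k$ and $l$, the edges $(12)$, $(21)$, $(34)$, $(43)$ are attached to the four \emph{distinct} vertices $1,2,3,4$ respectively, the quadruple $(z_{12},z_{21},z_{34},z_{43})$ recovers $\ell_1,\ell_2,\ell_3,\ell_4$ in turn: this gives injectivity of the orbit-to-tuple map. For surjectivity, given any $(z_{12},z_{21},z_{34},z_{43})\in(\C\setminus\{0,1\})^4$, I would take the standard frame, let each $\ell_i$ be the unique line through $[x_i]$ realizing the prescribed cross-ratio, observe that $z_{ij}\notin\{0,1,\infty\}$ forces $[x_m]\notin\ell_i$ for $m\neq i$ so that the four flags are in general position, and then read off that the invariants of this tetrahedron are the prescribed ones.

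I do not expect a serious obstacle: the proof is a normalization followed by four decoupled computations on $\PP^1$. The two points needing care are (i) the distinctness of the three base lines of each cross-ratio, where "no three points collinear" is used, and (ii) the verification that, once the points are in general position, the full general-position condition on the tetrahedron of flags reduces exactly to the four conditions $z_{ij}\in\C\setminus\{0,1\}$, with no hidden constraint coupling flags at different vertices. As an optional consistency check, the internal relations \eqref{IR} express all twelve cross-ratios and the four triple ratios of the tetrahedron in terms of the chosen four.
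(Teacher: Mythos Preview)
The paper does not give its own proof of this proposition: it is quoted verbatim as Proposition 2.10 of \cite{BFG} and used as a black box. There is therefore nothing in the present paper to compare your argument against. That said, your proof is correct and is essentially the standard one: normalize the four points to the standard projective frame using simple transitivity of $\PGL(3,\C)$ on frames, then observe that each $\ell_i=\ker f_i$ lives in the pencil through $[x_i]$, a copy of $\PP^1$ on which the three lines $(x_ix_j)$, $(x_ix_k)$, $(x_ix_l)$ are three distinct marked points, so that a single cross-ratio $z_{ij}$ pins down $\ell_i$. Your check that the first indices of $(12),(21),(34),(43)$ exhaust $\{1,2,3,4\}$ is exactly what makes these four invariants suffice, and your identification of the excluded values $0,1,\infty$ with the failures of the general-position condition $f_i(x_m)\neq 0$ for $m\neq i$ is correct and matches the paper's own remark following the statement. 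The only point worth emphasising explicitly is that the general-position hypothesis on the \emph{points} (no three collinear) is what guarantees the three reference lines in each pencil are distinct, so that the cross-ratio is a genuine coordinate; you note this, but it is the one place a reader might stumble.
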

Remark that $0$ and $1$ are forbidden values (as is $\infty$) because we assume the flags
to be in general position: hence every cross-ratio is the cross-ratio of four different points.

\medskip

{Let  now $M$ be an ideally triangulated cusped hyperbolic $3$-manifold. Denote by $\nu$ the number of tetrahedra and by 
$(\Delta_\mu)_{\mu=1}^\nu$ the family of tetrahedra triangulating $M$. We construct a decorated representation of 
$\Gamma = \pi_1(M)$ by turning each tetrahedron into a tetrahedron of 
flags and compute the (decorated) monodromy of their gluing. We only need to ensure 
that we may glue the tetrahedra together in a consistent way : 
\begin{enumerate}
\item whenever two tetrahedra $\Delta$ and $\Delta'$ are glued together along faces $T\subset\Delta$ and $T'\subset\Delta'$, 
$T$ and $T'$ should  have the same shape, that is the same triple ratio up to inversion,
\item around each edge of the triangulation, the monodromy should be the identity.
\end{enumerate}
These gluing conditions are described in details in \cite[Section 2.3]{BFGKR}. They give an 
equation for each face of the triangulation and two for each edge, which are respectively called
the {\it face equations} and the {\it edge equations}.  Together, they are called the 
\emph{gluing equations} and denoted by $(GE)$.

\begin{defi}\label{df:defor}
 The deformation variety of $M$, denoted $\mathrm{Defor}_3(M)$, is the subset of
 $\C^{12\nu}$ given by the $(z_{ij}(\Delta_\mu))_{0\leq i\neq j \leq 3,\, 1\leq \mu\leq \nu}$ 
satisfying the internal relation \eqref{IR} for each tetrahedron $T_\mu$ together with
 the gluing equations (GE).
\end{defi}

\begin{table}[h]
\begin{tabular}{c|c}
Face equations & Edge equations\\
\hline
{\footnotesize $ z_{41}(\Delta_0) z_{31}(\Delta_0) z_{21}(\Delta_0) z_{41}(\Delta_1) z_{31}(\Delta_1) z_{21}(\Delta_1) = 1 $} &
	{\footnotesize $ z_{43}(\Delta_0) z_{34}(\Delta_1) z_{34}(\Delta_2) z_{34}(\Delta_3) = 1 $}  \\ 
{\footnotesize $ z_{42}(\Delta_0) z_{32}(\Delta_0) z_{12}(\Delta_0) z_{42}(\Delta_2) z_{32}(\Delta_2) z_{12}(\Delta_2) = 1 $} & 
	{\footnotesize $ z_{34}(\Delta_0) z_{43}(\Delta_1) z_{43}(\Delta_2) z_{43}(\Delta_3) = 1 $} \\ 
{\footnotesize $ z_{41}(\Delta_2) z_{31}(\Delta_2) z_{21}(\Delta_2) z_{42}(\Delta_3) z_{32}(\Delta_3) z_{12}(\Delta_3) = 1 $} & 
	{\footnotesize $ z_{21}(\Delta_0) z_{12}(\Delta_1) z_{21}(\Delta_2) z_{21}(\Delta_3) = 1 $} \\ 
{\footnotesize $ z_{34}(\Delta_2) z_{24}(\Delta_2) z_{14}(\Delta_2) z_{43}(\Delta_3) z_{23}(\Delta_3) z_{13}(\Delta_3) = 1 $} &
	{\footnotesize $ z_{12}(\Delta_0) z_{21}(\Delta_1) z_{12}(\Delta_2) z_{12}(\Delta_3) = 1 $} \\ 
{\footnotesize $ z_{43}(\Delta_0) z_{23}(\Delta_0) z_{13}(\Delta_0) z_{34}(\Delta_3) z_{24}(\Delta_3) z_{14}(\Delta_3) = 1 $} & 
	{\footnotesize $ z_{24}(\Delta_0) z_{23}(\Delta_0) z_{13}(\Delta_0) z_{24}(\Delta_1) z_{23}(\Delta_1) z_{13}(\Delta_1) z_{14}(\Delta_2) z_{23}(\Delta_3) = 1 $} \\ 
{\footnotesize $ z_{34}(\Delta_0) z_{24}(\Delta_0) z_{14}(\Delta_0) z_{34}(\Delta_1) z_{24}(\Delta_1) z_{14}(\Delta_1) = 1 $} & 
	{\footnotesize $ z_{42}(\Delta_0) z_{31}(\Delta_0) z_{32}(\Delta_0) z_{42}(\Delta_1) z_{31}(\Delta_1) z_{32}(\Delta_1) z_{41}(\Delta_2) z_{32}(\Delta_3) = 1 $} \\ 
{\footnotesize $ z_{42}(\Delta_1) z_{32}(\Delta_1) z_{12}(\Delta_1) z_{41}(\Delta_3) z_{31}(\Delta_3) z_{21}(\Delta_3) = 1 $} &
	{\footnotesize $ z_{41}(\Delta_0) z_{41}(\Delta_1) z_{42}(\Delta_2) z_{31}(\Delta_2) z_{32}(\Delta_2)z_{42}(\Delta_3) z_{41}(\Delta_3) z_{31}(\Delta_3) = 1 $} \\ 
{\footnotesize $ z_{43}(\Delta_1) z_{23}(\Delta_1) z_{13}(\Delta_1) z_{43}(\Delta_2) z_{23}(\Delta_2) z_{13}(\Delta_2) = 1 $} &
	{\footnotesize $ z_{14}(\Delta_0) z_{14}(\Delta_1) z_{24}(\Delta_2) z_{23}(\Delta_2) z_{13}(\Delta_2) z_{24}(\Delta_3) z_{13}(\Delta_3) z_{14}(\Delta_3) = 1 $} \\ 

\end{tabular}
\caption{Gluing equations (GE) for the Whitehead Link Complement}\label{GE}
\end{table}
In the case of the Whitehead Link Complement, the gluing equations are the $16$ monomial equations 
displayed in Table \ref{GE}. Hence, the \emph{deformation variety} of the Whitehead Link Complement 
is the affine algebraic subset of $\C^{48}$ defined by the $32$ internal relations \eqref{IR} and $16$ gluing 
equations of Table \ref{GE}. The holonomy map, as defined in \cite{BFG, GGZ}, is a well-defined 
map from the deformation variety to the character variety 
$\chi_3(\pi)$.

\subsection{Finding $[\rho_0]$ in the deformation variety}
The  specific representation $\rho_0$ we consider is defined by $\rho_0(a)=S$ and $\rho_0(b)=T$
where $S$ and $T$ are the order three elements in SL(3,$\C$) given by \eqref{eq:defST} in Section \ref{ss:upper-bound}.
We have seen in Proposition \ref{pr:unique-deco} that $\rho_0$ admits a unique decoration. In Table \ref{flags}, 
we provide the flags associated by this decoration to the six vertices of the octahedron described in Section \ref{ss:octa}.
Note that $\rho_0$ maps every stabilizer of a vertex of the octahedron to a cyclic group. The flag associated
to this vertex is in fact invariant under the image by $\rho_0$ of the stabilizer.

\medskip

\begin{table}[ht]
\begin{center}
\begin{tabular}{c|c|c}
Vertex & Generator of its stabilizer in the image of $\rho_0$ & Invariant flag \\
&&\\
\hline
&&\\
$\infty$ &  $ST^{-1}$ & $F_{\infty}  : \left[ \begin{smallmatrix} 1 \\ 0\\0\end{smallmatrix} \right], [0,0,1]$ \\
&&\\
\hline
&&\\
$0$ &  $ST^{-1}S$ & $F_0 : \left[ \begin{smallmatrix} 1 \\ -\frac{3\sqrt{3} + i\sqrt{5}}{4}\\-1\end{smallmatrix} \right], [1, \frac{3\sqrt{3} - i\sqrt{5}}{4},-1]$\\
&&\\
\hline
&&\\
$i$ &  $S^{-1}T^{-1}$ & $F_i : \left[ \begin{smallmatrix} 1 \\ -\frac{\sqrt{3} + i\sqrt{5}}{4}\\\frac{-1+i\sqrt{15}}{4}\end{smallmatrix} \right], [\frac{1+i\sqrt{15}}{4}, \frac{\sqrt{3} - i\sqrt{5}}{4},-1]$\\
&&\\
\hline
&&\\
$-1+i$ &  $T^{-1}ST^{-1}$ & $F_{-1+i} : \left[ \begin{smallmatrix} 1 \\ \frac{3\sqrt{3} - i\sqrt{5}}{4}\\-1\end{smallmatrix} \right], [1, -\frac{3\sqrt{3} + i\sqrt{5}}{4},-1]$\\
&&\\
\hline
&&\\
$-i$ &  $T^{-1}S^{-1}$ & $F_{-i} : \left[ \begin{smallmatrix} 1 \\ \frac{\sqrt{3} - i\sqrt{5}}{4}\\-\frac{1+i\sqrt{15}}{4}\end{smallmatrix} \right],[\frac{1-i\sqrt{15}}{4},-\frac{\sqrt{3} + i\sqrt{5}}{4},-1]$\\
&&\\
\hline
&&\\
$\frac{-1+i}{2}$ &  $TS$ & $F_{\frac{-1+i}{2}} : \left[ \begin{smallmatrix} 0 \\ 0\\1\end{smallmatrix} \right], [1,0,0]$ \\
&&\\
\end{tabular}
\end{center}
\caption{The unique decoration of $\rho_0$}\label{flags}
\end{table}
{As a result, each tetrahedron $(\Delta_\nu)_{1\leq \nu\leq 4}$ is decorated by four flags. For instance, the tetrahedron
$\Delta_0=(i,0,\frac{-1+i}{2},\infty)$ is decorated by $(F_{i},F_0,F_{\frac{-1+i}{2}},F_\infty)$ and the other three tetrahedra 
are decorated in a similar way (the tetrahedra are listed in Section \ref{ss:triangulation}). It is a simple calculation to compute 
the cross-ratios associated to these flags as explained in Section \ref{ss:defor}.
Table \ref{coord} displays, for each tetrahedron, the values of coordinates $z_{12}$, $z_{21}$, $z_{34}$, $z_{43}$. 
The values of the other coordinates can be deduced from them using the internal relations \eqref{IR}.}\\
%
\begin{table}[ht]
\begin{center}
\begin{tabular}{c|c|c|c|c}
Tetrahedron & $z_{12}$ & $z_{21}$ & $z_{34}$ & $z_{43}$\\
&&&&\\
\hline
&&&&\\
$\Delta_0$ & $\frac{7+i\sqrt{15}}{4}$ & $\frac{-1-i\sqrt{15}}{8}$ &
                 $\frac{7-i\sqrt{15}}{4}$ & $\frac{-1+i\sqrt{15}}{8}$ \\
&&&&\\
\hline
&&&&\\
$\Delta_1$ & $\frac{-1 + i\sqrt{15}}{8}$ & $\frac{7 - i\sqrt{15}}{4}$ &
                 $\frac{-1 - i\sqrt{15}}{8}$ & $\frac{7+i\sqrt{15}}{4}$  \\
&&&&\\                 
                 
\hline
&&&&\\
$\Delta_2$ & $\frac{7 - i\sqrt{15}}{4}$ & $\frac{-1 + i\sqrt{15}}{8}$ &
                 $\frac{-1-i\sqrt{15}}{8}$ & $\frac{7+i\sqrt{15}}{4}$ \\
&&&&\\                 
                 
\hline
&&&&\\
$\Delta_3$ & $\frac{-1 - i\sqrt{15}}{8}$ & $\frac{7+i\sqrt{15}}{4}$ &
                 $\frac{-1 + i\sqrt{15}}{8}$ & $\frac{7-i\sqrt{15}}{4}$ \\
\end{tabular}
\end{center}
\caption{Coordinates for $[\rho_0]$ in the deformation variety}\label{coord}
\end{table}
{\begin{rem}
Note the high degree of symmetry of the considered decorated representation: the tetrahedra are
all the same up to the action of $\SL(3,\C)$ and a renumbering of the vertices.
\end{rem}}
\subsection{Computation of the Zariski tangent space: proof of Proposition \ref{pr:upper-bound}.}
The deformation variety is the (algebraic) set of all tuples of 48 complex numbers satisfying both the internal relations and 
the gluing equations (compare to \cite[Section 3]{BFGKR}). In other words it is the intersection of the inverse images

$$\mathrm{IR}^{-1}(1,\ldots,1)\cap \mathrm{GE}^{-1}(1,\ldots,1),$$
where the two maps $\mathrm{IR}$ and $\mathrm{GE}$ are defined by
\goodbreak
\begin{itemize}
\item $\mathrm{IR} \: : \: (\C\setminus\{0,1\})^{48} \to (\C^*)^{32}$ is the map representing the internal relations \eqref{IR}: it sends a collection 
$(z_{ij}(\Delta_{\nu}))$ (for every half-edge $ij$ and $1\leq \nu \leq 4$) to the collection of $32$ complex numbers given by:
$$(-z_{ij}(\Delta_\nu)z_{ik}(\Delta_\nu)z_{il}(\Delta_\nu), \quad z_{ik}(\Delta_\nu)(1-z_{ij}(\Delta_\nu))).$$
\item $\mathrm{GE} \: : \: (\C\setminus\{0,1\})^{48} \to (\C^*)^{16}$ is the collection of left-hand sides in the gluing equations of table \ref{GE}.
\end{itemize}
We denote by $(\mathrm{IR},\mathrm{GE})$ the map $(\C\setminus\{0,1\})^{48} \to (\C^*)^{48}$ given by 
the previous two maps.  The Zariski tangent space to the deformation variety is the kernel of the 
tangent map to $(\mathrm{IR},\mathrm{GE})$. As those maps are mostly monomial, we choose to
write their tangent maps in the following basis of tangent spaces at the source and target. We take, for each 
coordinate $z$, the vector field $z\frac{\partial}{\partial z}$.
In these basis, the tangent map to a function 
$\phi : (\C^*)^{48}\to \C^*$
has entries of the form $$\frac{z}{\phi} \frac{\partial \phi}{\partial z}.$$ 
This follows from the following elementary lemma:
\begin{lem}
Let $f : \C^* \to \C^*$ be a differentiable function. In the basis 
$z\frac{\partial}{\partial z}$, 
the tangent map $Tf$ at $a\in \C^*$ has coordinate $\frac{a}{f(a)}\frac{\partial f}{\partial z}$. 
\end{lem}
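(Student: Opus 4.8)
The plan is to carry out the chain rule in the two rescaled frames, treating the source coordinate $z$ and the target coordinate $w$ as distinct copies of $\C^*$. First I would record how the distinguished vectors relate to the standard ones: at the point $a \in \C^*$ the vector $z\frac{\partial}{\partial z}$ equals $a\,\frac{\partial}{\partial z}$, and at the image point $f(a)$ the vector $w\frac{\partial}{\partial w}$ equals $f(a)\,\frac{\partial}{\partial w}$. So passing to these frames is just multiplication by $a$ on the source and by $f(a)$ on the target.

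Next I would apply the ordinary differential: $df_a\bigl(\tfrac{\partial}{\partial z}\bigr) = f'(a)\,\tfrac{\partial}{\partial w}$, hence
$$df_a\!\left(z\tfrac{\partial}{\partial z}\Big|_a\right) = df_a\bigl(a\,\tfrac{\partial}{\partial z}\bigr) = a\,f'(a)\,\tfrac{\partial}{\partial w}\Big|_{f(a)} = \frac{a\,f'(a)}{f(a)}\left(w\tfrac{\partial}{\partial w}\Big|_{f(a)}\right).$$
Reading off the coefficient in front of the target frame vector $w\frac{\partial}{\partial w}$ gives the claimed coordinate $\frac{a}{f(a)}\frac{\partial f}{\partial z}$ (evaluated at $a$), since $\frac{\partial f}{\partial z}(a) = f'(a)$.

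There is essentially no obstacle here: the only thing to be careful about is not conflating the source and target coordinates — both happen to be called "a coordinate on $\C^*$", but the rescaling factors are $a$ and $f(a)$ respectively, and keeping these separate is what produces the ratio $a/f(a)$. The multivariable statement used in the paper (entries of the form $\frac{z}{\phi}\frac{\partial \phi}{\partial z}$ for $\phi : (\C^*)^{48}\to\C^*$) then follows by applying this lemma coordinate by coordinate, since the chosen frames on $(\C^*)^{48}$ and $(\C^*)^{48}$ are products of the one-dimensional frames treated above.
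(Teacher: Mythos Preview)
Your proof is correct and follows essentially the same approach as the paper: compute the tangent map in the standard frame $\frac{\partial}{\partial z}$ and then rescale by $a$ on the source and by $f(a)$ on the target to pass to the frames $z\frac{\partial}{\partial z}$ and $w\frac{\partial}{\partial w}$. Your explicit separation of source and target coordinates is a nice touch that makes the computation slightly cleaner than the paper's version.
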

\begin{proof}
In usual coordinates, by definition of partial derivative, the tangent map $T_af$ at a point $a$ maps 
$$ v\in T_a\C^* \quad \textrm{ to } \quad w = \frac{\partial f}{\partial z} v \in T_{f(a)}\C^*.$$
The basis change from $\frac{\partial}{\partial z}$ to $z\frac{\partial}{\partial z}$ in both tangent spaces transforms $v$ into $u = \frac{v}{a}$ and $w$ into $\frac{w}{f(a)}$. Hence, in new basis,
$u$ is sent to $$\frac{w}{f(a)}=\frac{1}{f(a)}\frac{\partial f}{\partial z} v = \frac{a}{f(a)}\frac{\partial f}{\partial z} u.$$
\end{proof}
 We will apply this lemma to each coordinate of the map $(\mathrm{IR},\mathrm{GE})$ to obtain a matrix for the tangent map to 
$(\mathrm{IR},\mathrm{GE})$ at the point $[\rho_0]$. This matrix, denoted $J$, has size $48\times48$ and is 
depicted in Table \ref{table:J} (see Remark \ref{rem:order} below). To construct $J$, we have to deal with two kinds of  functions, depending on the equations 
that form the the maps IR and GE : monomial maps or maps of the form $z_{ik}(\Delta_\nu)(1-z_{ij}(\Delta_\nu))$.
\begin{itemize}
\item If $f$ is a monomial map, its tangent map has integer entries equal to the 
exponent of the relative variable. As an example, this formula applied to the map 
$-z_{12}(\Delta_0)z_{13}(\Delta_0)z_{14}(\Delta_0)$ gives all entries equal to $0$ 
except for those associated to the variables  $z_{12}(\Delta_0)$, $z_{13}(\Delta_0)$ and 
 $z_{14}(\Delta_0)$ which give three entries equal to $1$. The same phenomenon appears for each of the first
sixteen rows of the matrix $J$. The gluing equations are also monomials (see Table \ref{GE}), but involve 
more variables. These correspond to lines 33 to 48 of the matrix $J$, that have all their coefficients equal to $0$ except for 4, 6 or 8 of them that are equal to $1$.
\item if $f$ has the form\footnote{We drop here the 
indication of the tetrahedron $\Delta_\nu$ for 
$0\leq \nu \leq 3$ in order to simplify the notations.}  $z_{ik}(1-z_{ij})$, its tangent map
 has every entry equal to $0$ except the ones corresponding to $z_{ij}$ and $z_{ik}$. 
Those two are respectively $-\frac{z_{ij}}{1-z_{ij}}$ and $1$. 
Note that, at a point satisfying the internal 
relations, we have the additional relation $ -\frac{z_{ij}}{1-z_{ij}}=z_{il}$ 
(see also the computation in 
\cite[Section 5]{BFGKR}, especially Lemma 5.3). Hence the entries for such a map are $0$, $1$ or $z_{il}$. Those appear in rows 
17 to 32 of the matrix $J$ displayed in Table \ref{table:J}. 
\end{itemize}
We see thus that $J$ has entries either integer or of the form $z_{il}(\Delta_\nu)$.
Note moreover that the last 16 rows, corresponding to the gluing equations, can be accessed directly 
by SnapPy \cite{SnapPy} under SageMath \cite{sage}: it is the Neumann-Zagier datum. This part of $J$ is 
directly given by the commands: 
\begin{verbatim}
import snappy; 
Triangulation("5^2_1").gluing_equations_pgl(3,equation_type='non_peripheral').matrix  
\end{verbatim}

The next step is to compute the kernel of $J$. As all entries are in the 
number field $\mathbb Q[i,\sqrt{3},\sqrt{5}]$, a computer algebra system  such as Sage computes it exactly. 
As a result, the dimension of this kernel is $4$ (see the Sage notebook \cite{Notebook}). We deduce that the 
dimension of the Zariski tangent space at the decoration of $[\rho_0]$ to the deformation variety is $4$.

\begin{rem}\label{rem:order}
To write the matrix $J$, we choose the same order on the variables $z_{ij}(\Delta_\nu)$ as SnapPy does.  
As the precise order it is not very enlightening, we omit this discussion here. A change of order on the 
variables amounts to a permutation of the columns of $J$, which does not affect the dimension of its kernel.
\end{rem}

Note that at $[\rho_0]$, the two subgroups generated by the pairs $(\rho_0(l_i),\rho_0(m_i))$ for $i=1,\, 2$ are 
regular unipotent: there is only one invariant flag for each one. As noted before, it implies that, locally the 
holonomy map between the deformation variety and the actual character variety is a finite 
ramified covering. This concludes the proof of Proposition \ref{pr:upper-bound}: 
the Zariski tangent space to the character variety at $[\rho_0]$ also has dimension $4$.

\begin{table}[ht]
\begin{center}
\scalebox{.34}{
{
$
\mathbf{\begin{array}{lcccccccccccccccccccccccccccccccccccccccccccccccc}
\scalebox{2}{\textrm{\black Row 1:}}&\scalebox{2}{$\mathbf{1}$} & \scalebox{2}{$\mathbf{1}$} & \scalebox{2}{$\mathbf{1}$} & {\black 0} & {\black 0} & {\black 0} & {\black 0} & {\black 0} & {\black 0} & {\black 0} & {\black 0} & {\black 0} & {\black 0} & {\black 0} & {\black 0} & {\black 0} & {\black 0} & {\black 0} & {\black 0} & {\black 0} & {\black 0} & {\black 0} & {\black 0} & {\black 0} & {\black 0} & {\black 0} & {\black 0} & {\black 0} & {\black 0} & {\black 0} & {\black 0} & {\black 0} & {\black 0} & {\black 0} & {\black 0} & {\black 0} & {\black 0} & {\black 0} & {\black 0} & {\black 0} & {\black 0} & {\black 0} & {\black 0} & {\black 0} & {\black 0} & {\black 0} & {\black 0} & {\black 0} \\
&{\black 0} & {\black 0} & {\black 0} & \scalebox{2}{$\mathbf{1}$} & \scalebox{2}{$\mathbf{1}$} & \scalebox{2}{$\mathbf{1}$} & {\black 0} & {\black 0} & {\black 0} & {\black 0} & {\black 0} & {\black 0} & {\black 0} & {\black 0} & {\black 0} & {\black 0} & {\black 0} & {\black 0} & {\black 0} & {\black 0} & {\black 0} & {\black 0} & {\black 0} & {\black 0} & {\black 0} & {\black 0} & {\black 0} & {\black 0} & {\black 0} & {\black 0} & {\black 0} & {\black 0} & {\black 0} & {\black 0} & {\black 0} & {\black 0} & {\black 0} & {\black 0} & {\black 0} & {\black 0} & {\black 0} & {\black 0} & {\black 0} & {\black 0} & {\black 0} & {\black 0} & {\black 0} & {\black 0} \\
&{\black 0} & {\black 0} & {\black 0} & {\black 0} & {\black 0} & {\black 0} & \scalebox{2}{$\mathbf{1}$} & \scalebox{2}{$\mathbf{1}$} & \scalebox{2}{$\mathbf{1}$} & {\black 0} & {\black 0} & {\black 0} & {\black 0} & {\black 0} & {\black 0} & {\black 0} & {\black 0} & {\black 0} & {\black 0} & {\black 0} & {\black 0} & {\black 0} & {\black 0} & {\black 0} & {\black 0} & {\black 0} & {\black 0} & {\black 0} & {\black 0} & {\black 0} & {\black 0} & {\black 0} & {\black 0} & {\black 0} & {\black 0} & {\black 0} & {\black 0} & {\black 0} & {\black 0} & {\black 0} & {\black 0} & {\black 0} & {\black 0} & {\black 0} & {\black 0} & {\black 0} & {\black 0} & {\black 0} \\
&{\black 0} & {\black 0} & {\black 0} & {\black 0} & {\black 0} & {\black 0} & {\black 0} & {\black 0} & {\black 0} & \scalebox{2}{$\mathbf{1}$} & \scalebox{2}{$\mathbf{1}$} & \scalebox{2}{$\mathbf{1}$} & {\black 0} & {\black 0} & {\black 0} & {\black 0} & {\black 0} & {\black 0} & {\black 0} & {\black 0} & {\black 0} & {\black 0} & {\black 0} & {\black 0} & {\black 0} & {\black 0} & {\black 0} & {\black 0} & {\black 0} & {\black 0} & {\black 0} & {\black 0} & {\black 0} & {\black 0} & {\black 0} & {\black 0} & {\black 0} & {\black 0} & {\black 0} & {\black 0} & {\black 0} & {\black 0} & {\black 0} & {\black 0} & {\black 0} & {\black 0} & {\black 0} & {\black 0} \\
&{\black 0} & {\black 0} & {\black 0} & {\black 0} & {\black 0} & {\black 0} & {\black 0} & {\black 0} & {\black 0} & {\black 0} & {\black 0} & {\black 0} & \scalebox{2}{$\mathbf{1}$} & \scalebox{2}{$\mathbf{1}$} & \scalebox{2}{$\mathbf{1}$} & {\black 0} & {\black 0} & {\black 0} & {\black 0} & {\black 0} & {\black 0} & {\black 0} & {\black 0} & {\black 0} & {\black 0} & {\black 0} & {\black 0} & {\black 0} & {\black 0} & {\black 0} & {\black 0} & {\black 0} & {\black 0} & {\black 0} & {\black 0} & {\black 0} & {\black 0} & {\black 0} & {\black 0} & {\black 0} & {\black 0} & {\black 0} & {\black 0} & {\black 0} & {\black 0} & {\black 0} & {\black 0} & {\black 0} \\
&{\black 0} & {\black 0} & {\black 0} & {\black 0} & {\black 0} & {\black 0} & {\black 0} & {\black 0} & {\black 0} & {\black 0} & {\black 0} & {\black 0} & {\black 0} & {\black 0} & {\black 0} & \scalebox{2}{$\mathbf{1}$} & \scalebox{2}{$\mathbf{1}$} & \scalebox{2}{$\mathbf{1}$} & {\black 0} & {\black 0} & {\black 0} & {\black 0} & {\black 0} & {\black 0} & {\black 0} & {\black 0} & {\black 0} & {\black 0} & {\black 0} & {\black 0} & {\black 0} & {\black 0} & {\black 0} & {\black 0} & {\black 0} & {\black 0} & {\black 0} & {\black 0} & {\black 0} & {\black 0} & {\black 0} & {\black 0} & {\black 0} & {\black 0} & {\black 0} & {\black 0} & {\black 0} & {\black 0} \\
&{\black 0} & {\black 0} & {\black 0} & {\black 0} & {\black 0} & {\black 0} & {\black 0} & {\black 0} & {\black 0} & {\black 0} & {\black 0} & {\black 0} & {\black 0} & {\black 0} & {\black 0} & {\black 0} & {\black 0} & {\black 0} & \scalebox{2}{$\mathbf{1}$} & \scalebox{2}{$\mathbf{1}$} & \scalebox{2}{$\mathbf{1}$} & {\black 0} & {\black 0} & {\black 0} & {\black 0} & {\black 0} & {\black 0} & {\black 0} & {\black 0} & {\black 0} & {\black 0} & {\black 0} & {\black 0} & {\black 0} & {\black 0} & {\black 0} & {\black 0} & {\black 0} & {\black 0} & {\black 0} & {\black 0} & {\black 0} & {\black 0} & {\black 0} & {\black 0} & {\black 0} & {\black 0} & {\black 0} \\
&{\black 0} & {\black 0} & {\black 0} & {\black 0} & {\black 0} & {\black 0} & {\black 0} & {\black 0} & {\black 0} & {\black 0} & {\black 0} & {\black 0} & {\black 0} & {\black 0} & {\black 0} & {\black 0} & {\black 0} & {\black 0} & {\black 0} & {\black 0} & {\black 0} & \scalebox{2}{$\mathbf{1}$} & \scalebox{2}{$\mathbf{1}$} & \scalebox{2}{$\mathbf{1}$} & {\black 0} & {\black 0} & {\black 0} & {\black 0} & {\black 0} & {\black 0} & {\black 0} & {\black 0} & {\black 0} & {\black 0} & {\black 0} & {\black 0} & {\black 0} & {\black 0} & {\black 0} & {\black 0} & {\black 0} & {\black 0} & {\black 0} & {\black 0} & {\black 0} & {\black 0} & {\black 0} & {\black 0} \\
&{\black 0} & {\black 0} & {\black 0} & {\black 0} & {\black 0} & {\black 0} & {\black 0} & {\black 0} & {\black 0} & {\black 0} & {\black 0} & {\black 0} & {\black 0} & {\black 0} & {\black 0} & {\black 0} & {\black 0} & {\black 0} & {\black 0} & {\black 0} & {\black 0} & {\black 0} & {\black 0} & {\black 0} & \scalebox{2}{$\mathbf{1}$} & \scalebox{2}{$\mathbf{1}$} & \scalebox{2}{$\mathbf{1}$} & {\black 0} & {\black 0} & {\black 0} & {\black 0} & {\black 0} & {\black 0} & {\black 0} & {\black 0} & {\black 0} & {\black 0} & {\black 0} & {\black 0} & {\black 0} & {\black 0} & {\black 0} & {\black 0} & {\black 0} & {\black 0} & {\black 0} & {\black 0} & {\black 0} \\
&{\black 0} & {\black 0} & {\black 0} & {\black 0} & {\black 0} & {\black 0} & {\black 0} & {\black 0} & {\black 0} & {\black 0} & {\black 0} & {\black 0} & {\black 0} & {\black 0} & {\black 0} & {\black 0} & {\black 0} & {\black 0} & {\black 0} & {\black 0} & {\black 0} & {\black 0} & {\black 0} & {\black 0} & {\black 0} & {\black 0} & {\black 0} & \scalebox{2}{$\mathbf{1}$} & \scalebox{2}{$\mathbf{1}$} & \scalebox{2}{$\mathbf{1}$} & {\black 0} & {\black 0} & {\black 0} & {\black 0} & {\black 0} & {\black 0} & {\black 0} & {\black 0} & {\black 0} & {\black 0} & {\black 0} & {\black 0} & {\black 0} & {\black 0} & {\black 0} & {\black 0} & {\black 0} & {\black 0} \\
&{\black 0} & {\black 0} & {\black 0} & {\black 0} & {\black 0} & {\black 0} & {\black 0} & {\black 0} & {\black 0} & {\black 0} & {\black 0} & {\black 0} & {\black 0} & {\black 0} & {\black 0} & {\black 0} & {\black 0} & {\black 0} & {\black 0} & {\black 0} & {\black 0} & {\black 0} & {\black 0} & {\black 0} & {\black 0} & {\black 0} & {\black 0} & {\black 0} & {\black 0} & {\black 0} & \scalebox{2}{$\mathbf{1}$} & \scalebox{2}{$\mathbf{1}$} & \scalebox{2}{$\mathbf{1}$} & {\black 0} & {\black 0} & {\black 0} & {\black 0} & {\black 0} & {\black 0} & {\black 0} & {\black 0} & {\black 0} & {\black 0} & {\black 0} & {\black 0} & {\black 0} & {\black 0} & {\black 0} \\
&{\black 0} & {\black 0} & {\black 0} & {\black 0} & {\black 0} & {\black 0} & {\black 0} & {\black 0} & {\black 0} & {\black 0} & {\black 0} & {\black 0} & {\black 0} & {\black 0} & {\black 0} & {\black 0} & {\black 0} & {\black 0} & {\black 0} & {\black 0} & {\black 0} & {\black 0} & {\black 0} & {\black 0} & {\black 0} & {\black 0} & {\black 0} & {\black 0} & {\black 0} & {\black 0} & {\black 0} & {\black 0} & {\black 0} & \scalebox{2}{$\mathbf{1}$} & \scalebox{2}{$\mathbf{1}$} & \scalebox{2}{$\mathbf{1}$} & {\black 0} & {\black 0} & {\black 0} & {\black 0} & {\black 0} & {\black 0} & {\black 0} & {\black 0} & {\black 0} & {\black 0} & {\black 0} & {\black 0} \\
&{\black 0} & {\black 0} & {\black 0} & {\black 0} & {\black 0} & {\black 0} & {\black 0} & {\black 0} & {\black 0} & {\black 0} & {\black 0} & {\black 0} & {\black 0} & {\black 0} & {\black 0} & {\black 0} & {\black 0} & {\black 0} & {\black 0} & {\black 0} & {\black 0} & {\black 0} & {\black 0} & {\black 0} & {\black 0} & {\black 0} & {\black 0} & {\black 0} & {\black 0} & {\black 0} & {\black 0} & {\black 0} & {\black 0} & {\black 0} & {\black 0} & {\black 0} & \scalebox{2}{$\mathbf{1}$} & \scalebox{2}{$\mathbf{1}$} & \scalebox{2}{$\mathbf{1}$} & {\black 0} & {\black 0} & {\black 0} & {\black 0} & {\black 0} & {\black 0} & {\black 0} & {\black 0} & {\black 0} \\
&{\black 0} & {\black 0} & {\black 0} & {\black 0} & {\black 0} & {\black 0} & {\black 0} & {\black 0} & {\black 0} & {\black 0} & {\black 0} & {\black 0} & {\black 0} & {\black 0} & {\black 0} & {\black 0} & {\black 0} & {\black 0} & {\black 0} & {\black 0} & {\black 0} & {\black 0} & {\black 0} & {\black 0} & {\black 0} & {\black 0} & {\black 0} & {\black 0} & {\black 0} & {\black 0} & {\black 0} & {\black 0} & {\black 0} & {\black 0} & {\black 0} & {\black 0} & {\black 0} & {\black 0} & {\black 0} & \scalebox{2}{$\mathbf{1}$} & \scalebox{2}{$\mathbf{1}$} & \scalebox{2}{$\mathbf{1}$} & {\black 0} & {\black 0} & {\black 0} & {\black 0} & {\black 0} & {\black 0} \\
&{\black 0} & {\black 0} & {\black 0} & {\black 0} & {\black 0} & {\black 0} & {\black 0} & {\black 0} & {\black 0} & {\black 0} & {\black 0} & {\black 0} & {\black 0} & {\black 0} & {\black 0} & {\black 0} & {\black 0} & {\black 0} & {\black 0} & {\black 0} & {\black 0} & {\black 0} & {\black 0} & {\black 0} & {\black 0} & {\black 0} & {\black 0} & {\black 0} & {\black 0} & {\black 0} & {\black 0} & {\black 0} & {\black 0} & {\black 0} & {\black 0} & {\black 0} & {\black 0} & {\black 0} & {\black 0} & {\black 0} & {\black 0} & {\black 0} & \scalebox{2}{$\mathbf{1}$} & \scalebox{2}{$\mathbf{1}$} & \scalebox{2}{$\mathbf{1}$} & {\black 0} & {\black 0} & {\black 0} \\
\scalebox{2}{\textrm{\black Row 16:}} &{\black 0} & {\black 0} & {\black 0} & {\black 0} & {\black 0} & {\black 0} & {\black 0} & {\black 0} & {\black 0} & {\black 0} & {\black 0} & {\black 0} & {\black 0} & {\black 0} & {\black 0} & {\black 0} & {\black 0} & {\black 0} & {\black 0} & {\black 0} & {\black 0} & {\black 0} & {\black 0} & {\black 0} & {\black 0} & {\black 0} & {\black 0} & {\black 0} & {\black 0} & {\black 0} & {\black 0} & {\black 0} & {\black 0} & {\black 0} & {\black 0} & {\black 0} & {\black 0} & {\black 0} & {\black 0} & {\black 0} & {\black 0} & {\black 0} & {\black 0} & {\black 0} & {\black 0} & \scalebox{2}{$\mathbf{1}$} & \scalebox{2}{$\mathbf{1}$} & \scalebox{2}{$\mathbf{1}$} \\
\scalebox{2}{\textrm{\black Row 17:}} &\scalebox{2}{$\mathbf{1}$} & {\black 0} &  \scalebox{2}{$\mathbf{\bar x}$}   & {\black 0} & {\black 0} & {\black 0} & {\black 0} & {\black 0} & {\black 0} & {\black 0} & {\black 0} & {\black 0} & {\black 0} & {\black 0} & {\black 0} & {\black 0} & {\black 0} & {\black 0} & {\black 0} & {\black 0} & {\black 0} & {\black 0} & {\black 0} & {\black 0} & {\black 0} & {\black 0} & {\black 0} & {\black 0} & {\black 0} & {\black 0} & {\black 0} & {\black 0} & {\black 0} & {\black 0} & {\black 0} & {\black 0} & {\black 0} & {\black 0} & {\black 0} & {\black 0} & {\black 0} & {\black 0} & {\black 0} & {\black 0} & {\black 0} & {\black 0} & {\black 0} & {\black 0} \\
&{\black 0} & {\black 0} & {\black 0} & \scalebox{2}{$\mathbf{1}$} & {\black 0} &  \scalebox{2}{$\mathbf{\bar y}$}   & {\black 0} & {\black 0} & {\black 0} & {\black 0} & {\black 0} & {\black 0} & {\black 0} & {\black 0} & {\black 0} & {\black 0} & {\black 0} & {\black 0} & {\black 0} & {\black 0} & {\black 0} & {\black 0} & {\black 0} & {\black 0} & {\black 0} & {\black 0} & {\black 0} & {\black 0} & {\black 0} & {\black 0} & {\black 0} & {\black 0} & {\black 0} & {\black 0} & {\black 0} & {\black 0} & {\black 0} & {\black 0} & {\black 0} & {\black 0} & {\black 0} & {\black 0} & {\black 0} & {\black 0} & {\black 0} & {\black 0} & {\black 0} & {\black 0} \\
&{\black 0} & {\black 0} & {\black 0} & {\black 0} & {\black 0} & {\black 0} & \scalebox{2}{$\mathbf{1}$} & {\black 0} & \ \scalebox{2}{$\mathbf{x}$}   & {\black 0} & {\black 0} & {\black 0} & {\black 0} & {\black 0} & {\black 0} & {\black 0} & {\black 0} & {\black 0} & {\black 0} & {\black 0} & {\black 0} & {\black 0} & {\black 0} & {\black 0} & {\black 0} & {\black 0} & {\black 0} & {\black 0} & {\black 0} & {\black 0} & {\black 0} & {\black 0} & {\black 0} & {\black 0} & {\black 0} & {\black 0} & {\black 0} & {\black 0} & {\black 0} & {\black 0} & {\black 0} & {\black 0} & {\black 0} & {\black 0} & {\black 0} & {\black 0} & {\black 0} & {\black 0} \\
&{\black 0} & {\black 0} & {\black 0} & {\black 0} & {\black 0} & {\black 0} & {\black 0} & {\black 0} & {\black 0} & \scalebox{2}{$\mathbf{1}$} & {\black 0} & \ \scalebox{2}{$\mathbf{y}$}   & {\black 0} & {\black 0} & {\black 0} & {\black 0} & {\black 0} & {\black 0} & {\black 0} & {\black 0} & {\black 0} & {\black 0} & {\black 0} & {\black 0} & {\black 0} & {\black 0} & {\black 0} & {\black 0} & {\black 0} & {\black 0} & {\black 0} & {\black 0} & {\black 0} & {\black 0} & {\black 0} & {\black 0} & {\black 0} & {\black 0} & {\black 0} & {\black 0} & {\black 0} & {\black 0} & {\black 0} & {\black 0} & {\black 0} & {\black 0} & {\black 0} & {\black 0} \\
&{\black 0} & {\black 0} & {\black 0} & {\black 0} & {\black 0} & {\black 0} & {\black 0} & {\black 0} & {\black 0} & {\black 0} & {\black 0} & {\black 0} & \scalebox{2}{$\mathbf{1}$} & {\black 0} &  \scalebox{2}{$\mathbf{\bar x}$}   & {\black 0} & {\black 0} & {\black 0} & {\black 0} & {\black 0} & {\black 0} & {\black 0} & {\black 0} & {\black 0} & {\black 0} & {\black 0} & {\black 0} & {\black 0} & {\black 0} & {\black 0} & {\black 0} & {\black 0} & {\black 0} & {\black 0} & {\black 0} & {\black 0} & {\black 0} & {\black 0} & {\black 0} & {\black 0} & {\black 0} & {\black 0} & {\black 0} & {\black 0} & {\black 0} & {\black 0} & {\black 0} & {\black 0} \\
&{\black 0} & {\black 0} & {\black 0} & {\black 0} & {\black 0} & {\black 0} & {\black 0} & {\black 0} & {\black 0} & {\black 0} & {\black 0} & {\black 0} & {\black 0} & {\black 0} & {\black 0} & \scalebox{2}{$\mathbf{1}$} & {\black 0} &  \scalebox{2}{$\mathbf{\bar y}$}   & {\black 0} & {\black 0} & {\black 0} & {\black 0} & {\black 0} & {\black 0} & {\black 0} & {\black 0} & {\black 0} & {\black 0} & {\black 0} & {\black 0} & {\black 0} & {\black 0} & {\black 0} & {\black 0} & {\black 0} & {\black 0} & {\black 0} & {\black 0} & {\black 0} & {\black 0} & {\black 0} & {\black 0} & {\black 0} & {\black 0} & {\black 0} & {\black 0} & {\black 0} & {\black 0} \\
&{\black 0} & {\black 0} & {\black 0} & {\black 0} & {\black 0} & {\black 0} & {\black 0} & {\black 0} & {\black 0} & {\black 0} & {\black 0} & {\black 0} & {\black 0} & {\black 0} & {\black 0} & {\black 0} & {\black 0} & {\black 0} & \scalebox{2}{$\mathbf{1}$} & {\black 0} & \ \scalebox{2}{$\mathbf{x}$}   & {\black 0} & {\black 0} & {\black 0} & {\black 0} & {\black 0} & {\black 0} & {\black 0} & {\black 0} & {\black 0} & {\black 0} & {\black 0} & {\black 0} & {\black 0} & {\black 0} & {\black 0} & {\black 0} & {\black 0} & {\black 0} & {\black 0} & {\black 0} & {\black 0} & {\black 0} & {\black 0} & {\black 0} & {\black 0} & {\black 0} & {\black 0} \\
&{\black 0} & {\black 0} & {\black 0} & {\black 0} & {\black 0} & {\black 0} & {\black 0} & {\black 0} & {\black 0} & {\black 0} & {\black 0} & {\black 0} & {\black 0} & {\black 0} & {\black 0} & {\black 0} & {\black 0} & {\black 0} & {\black 0} & {\black 0} & {\black 0} & \scalebox{2}{$\mathbf{1}$} & {\black 0} & \ \scalebox{2}{$\mathbf{y}$}   & {\black 0} & {\black 0} & {\black 0} & {\black 0} & {\black 0} & {\black 0} & {\black 0} & {\black 0} & {\black 0} & {\black 0} & {\black 0} & {\black 0} & {\black 0} & {\black 0} & {\black 0} & {\black 0} & {\black 0} & {\black 0} & {\black 0} & {\black 0} & {\black 0} & {\black 0} & {\black 0} & {\black 0} \\
&{\black 0} & {\black 0} & {\black 0} & {\black 0} & {\black 0} & {\black 0} & {\black 0} & {\black 0} & {\black 0} & {\black 0} & {\black 0} & {\black 0} & {\black 0} & {\black 0} & {\black 0} & {\black 0} & {\black 0} & {\black 0} & {\black 0} & {\black 0} & {\black 0} & {\black 0} & {\black 0} & {\black 0} & \scalebox{2}{$\mathbf{1}$} & {\black 0} & \ \scalebox{2}{$\mathbf{x}$}   & {\black 0} & {\black 0} & {\black 0} & {\black 0} & {\black 0} & {\black 0} & {\black 0} & {\black 0} & {\black 0} & {\black 0} & {\black 0} & {\black 0} & {\black 0} & {\black 0} & {\black 0} & {\black 0} & {\black 0} & {\black 0} & {\black 0} & {\black 0} & {\black 0} \\
&{\black 0} & {\black 0} & {\black 0} & {\black 0} & {\black 0} & {\black 0} & {\black 0} & {\black 0} & {\black 0} & {\black 0} & {\black 0} & {\black 0} & {\black 0} & {\black 0} & {\black 0} & {\black 0} & {\black 0} & {\black 0} & {\black 0} & {\black 0} & {\black 0} & {\black 0} & {\black 0} & {\black 0} & {\black 0} & {\black 0} & {\black 0} & \scalebox{2}{$\mathbf{1}$} & {\black 0} & \ \scalebox{2}{$\mathbf{y}$}   & {\black 0} & {\black 0} & {\black 0} & {\black 0} & {\black 0} & {\black 0} & {\black 0} & {\black 0} & {\black 0} & {\black 0} & {\black 0} & {\black 0} & {\black 0} & {\black 0} & {\black 0} & {\black 0} & {\black 0} & {\black 0} \\
&{\black 0} & {\black 0} & {\black 0} & {\black 0} & {\black 0} & {\black 0} & {\black 0} & {\black 0} & {\black 0} & {\black 0} & {\black 0} & {\black 0} & {\black 0} & {\black 0} & {\black 0} & {\black 0} & {\black 0} & {\black 0} & {\black 0} & {\black 0} & {\black 0} & {\black 0} & {\black 0} & {\black 0} & {\black 0} & {\black 0} & {\black 0} & {\black 0} & {\black 0} & {\black 0} & \scalebox{2}{$\mathbf{1}$} & {\black 0} &  \scalebox{2}{$\mathbf{\bar x}$}   & {\black 0} & {\black 0} & {\black 0} & {\black 0} & {\black 0} & {\black 0} & {\black 0} & {\black 0} & {\black 0} & {\black 0} & {\black 0} & {\black 0} & {\black 0} & {\black 0} & {\black 0} \\
&{\black 0} & {\black 0} & {\black 0} & {\black 0} & {\black 0} & {\black 0} & {\black 0} & {\black 0} & {\black 0} & {\black 0} & {\black 0} & {\black 0} & {\black 0} & {\black 0} & {\black 0} & {\black 0} & {\black 0} & {\black 0} & {\black 0} & {\black 0} & {\black 0} & {\black 0} & {\black 0} & {\black 0} & {\black 0} & {\black 0} & {\black 0} & {\black 0} & {\black 0} & {\black 0} & {\black 0} & {\black 0} & {\black 0} & \scalebox{2}{$\mathbf{1}$} & {\black 0} &  \scalebox{2}{$\mathbf{\bar y}$}   & {\black 0} & {\black 0} & {\black 0} & {\black 0} & {\black 0} & {\black 0} & {\black 0} & {\black 0} & {\black 0} & {\black 0} & {\black 0} & {\black 0} \\
&{\black 0} & {\black 0} & {\black 0} & {\black 0} & {\black 0} & {\black 0} & {\black 0} & {\black 0} & {\black 0} & {\black 0} & {\black 0} & {\black 0} & {\black 0} & {\black 0} & {\black 0} & {\black 0} & {\black 0} & {\black 0} & {\black 0} & {\black 0} & {\black 0} & {\black 0} & {\black 0} & {\black 0} & {\black 0} & {\black 0} & {\black 0} & {\black 0} & {\black 0} & {\black 0} & {\black 0} & {\black 0} & {\black 0} & {\black 0} & {\black 0} & {\black 0} & \scalebox{2}{$\mathbf{1}$} & {\black 0} & \ \scalebox{2}{$\mathbf{y}$}   & {\black 0} & {\black 0} & {\black 0} & {\black 0} & {\black 0} & {\black 0} & {\black 0} & {\black 0} & {\black 0} \\
&{\black 0} & {\black 0} & {\black 0} & {\black 0} & {\black 0} & {\black 0} & {\black 0} & {\black 0} & {\black 0} & {\black 0} & {\black 0} & {\black 0} & {\black 0} & {\black 0} & {\black 0} & {\black 0} & {\black 0} & {\black 0} & {\black 0} & {\black 0} & {\black 0} & {\black 0} & {\black 0} & {\black 0} & {\black 0} & {\black 0} & {\black 0} & {\black 0} & {\black 0} & {\black 0} & {\black 0} & {\black 0} & {\black 0} & {\black 0} & {\black 0} & {\black 0} & {\black 0} & {\black 0} & {\black 0} & \scalebox{2}{$\mathbf{1}$} & {\black 0} & \ \scalebox{2}{$\mathbf{x}$}   & {\black 0} & {\black 0} & {\black 0} & {\black 0} & {\black 0} & {\black 0} \\
&{\black 0} & {\black 0} & {\black 0} & {\black 0} & {\black 0} & {\black 0} & {\black 0} & {\black 0} & {\black 0} & {\black 0} & {\black 0} & {\black 0} & {\black 0} & {\black 0} & {\black 0} & {\black 0} & {\black 0} & {\black 0} & {\black 0} & {\black 0} & {\black 0} & {\black 0} & {\black 0} & {\black 0} & {\black 0} & {\black 0} & {\black 0} & {\black 0} & {\black 0} & {\black 0} & {\black 0} & {\black 0} & {\black 0} & {\black 0} & {\black 0} & {\black 0} & {\black 0} & {\black 0} & {\black 0} & {\black 0} & {\black 0} & {\black 0} & \scalebox{2}{$\mathbf{1}$} & {\black 0} &  \scalebox{2}{$\mathbf{\bar y}$}   & {\black 0} & {\black 0} & {\black 0} \\
\scalebox{2}{\textrm{\black Row 32:}} &{\black 0} & {\black 0} & {\black 0} & {\black 0} & {\black 0} & {\black 0} & {\black 0} & {\black 0} & {\black 0} & {\black 0} & {\black 0} & {\black 0} & {\black 0} & {\black 0} & {\black 0} & {\black 0} & {\black 0} & {\black 0} & {\black 0} & {\black 0} & {\black 0} & {\black 0} & {\black 0} & {\black 0} & {\black 0} & {\black 0} & {\black 0} & {\black 0} & {\black 0} & {\black 0} & {\black 0} & {\black 0} & {\black 0} & {\black 0} & {\black 0} & {\black 0} & {\black 0} & {\black 0} & {\black 0} & {\black 0} & {\black 0} & {\black 0} & {\black 0} & {\black 0} & {\black 0} & \scalebox{2}{$\mathbf{1}$} & {\black 0} &  \scalebox{2}{$\mathbf{\bar x}$}  \\
\scalebox{2}{\textrm{\black Row 33:}} &\scalebox{2}{$\mathbf{1}$} & {\black 0} & {\black 0} & {\black 0} & {\black 0} & {\black 0} & {\black 0} & {\black 0} & {\black 0} & {\black 0} & {\black 0} & {\black 0} & {\black 0} & {\black 0} & {\black 0} & \scalebox{2}{$\mathbf{1}$} & {\black 0} & {\black 0} & {\black 0} & {\black 0} & {\black 0} & {\black 0} & {\black 0} & {\black 0} & {\black 0} & {\black 0} & {\black 0} & \scalebox{2}{$\mathbf{1}$} & {\black 0} & {\black 0} & {\black 0} & {\black 0} & {\black 0} & {\black 0} & {\black 0} & {\black 0} & {\black 0} & {\black 0} & {\black 0} & \scalebox{2}{$\mathbf{1}$} & {\black 0} & {\black 0} & {\black 0} & {\black 0} & {\black 0} & {\black 0} & {\black 0} & {\black 0} \\
&{\black 0} & {\black 0} & {\black 0} & \scalebox{2}{$\mathbf{1}$} & {\black 0} & {\black 0} & {\black 0} & {\black 0} & {\black 0} & {\black 0} & {\black 0} & {\black 0} & \scalebox{2}{$\mathbf{1}$} & {\black 0} & {\black 0} & {\black 0} & {\black 0} & {\black 0} & {\black 0} & {\black 0} & {\black 0} & {\black 0} & {\black 0} & {\black 0} & \scalebox{2}{$\mathbf{1}$} & {\black 0} & {\black 0} & {\black 0} & {\black 0} & {\black 0} & {\black 0} & {\black 0} & {\black 0} & {\black 0} & {\black 0} & {\black 0} & \scalebox{2}{$\mathbf{1}$} & {\black 0} & {\black 0} & {\black 0} & {\black 0} & {\black 0} & {\black 0} & {\black 0} & {\black 0} & {\black 0} & {\black 0} & {\black 0} \\
&{\black 0} & {\black 0} & {\black 0} & {\black 0} & {\black 0} & {\black 0} & {\black 0} & \scalebox{2}{$\mathbf{1}$} & \scalebox{2}{$\mathbf{1}$} & {\black 0} & \scalebox{2}{$\mathbf{1}$} & {\black 0} & {\black 0} & {\black 0} & {\black 0} & {\black 0} & {\black 0} & {\black 0} & {\black 0} & \scalebox{2}{$\mathbf{1}$} & \scalebox{2}{$\mathbf{1}$} & {\black 0} & \scalebox{2}{$\mathbf{1}$} & {\black 0} & {\black 0} & {\black 0} & {\black 0} & {\black 0} & {\black 0} & {\black 0} & {\black 0} & {\black 0} & {\black 0} & {\black 0} & {\black 0} & \scalebox{2}{$\mathbf{1}$} & {\black 0} & {\black 0} & {\black 0} & {\black 0} & {\black 0} & {\black 0} & {\black 0} & {\black 0} & \scalebox{2}{$\mathbf{1}$} & {\black 0} & {\black 0} & {\black 0} \\
&{\black 0} & \scalebox{2}{$\mathbf{1}$} & {\black 0} & {\black 0} & \scalebox{2}{$\mathbf{1}$} & \scalebox{2}{$\mathbf{1}$} & {\black 0} & {\black 0} & {\black 0} & {\black 0} & {\black 0} & {\black 0} & {\black 0} & \scalebox{2}{$\mathbf{1}$} & {\black 0} & {\black 0} & \scalebox{2}{$\mathbf{1}$} & \scalebox{2}{$\mathbf{1}$} & {\black 0} & {\black 0} & {\black 0} & {\black 0} & {\black 0} & {\black 0} & {\black 0} & {\black 0} & \scalebox{2}{$\mathbf{1}$} & {\black 0} & {\black 0} & {\black 0} & {\black 0} & {\black 0} & {\black 0} & {\black 0} & {\black 0} & {\black 0} & {\black 0} & {\black 0} & {\black 0} & {\black 0} & {\black 0} & \scalebox{2}{$\mathbf{1}$} & {\black 0} & {\black 0} & {\black 0} & {\black 0} & {\black 0} & {\black 0} \\
&{\black 0} & {\black 0} & \scalebox{2}{$\mathbf{1}$} & {\black 0} & {\black 0} & {\black 0} & {\black 0} & {\black 0} & {\black 0} & {\black 0} & {\black 0} & {\black 0} & {\black 0} & {\black 0} & \scalebox{2}{$\mathbf{1}$} & {\black 0} & {\black 0} & {\black 0} & {\black 0} & {\black 0} & {\black 0} & {\black 0} & {\black 0} & {\black 0} & {\black 0} & \scalebox{2}{$\mathbf{1}$} & {\black 0} & {\black 0} & \scalebox{2}{$\mathbf{1}$} & \scalebox{2}{$\mathbf{1}$} & {\black 0} & {\black 0} & {\black 0} & {\black 0} & {\black 0} & {\black 0} & {\black 0} & \scalebox{2}{$\mathbf{1}$} & \scalebox{2}{$\mathbf{1}$} & {\black 0} & \scalebox{2}{$\mathbf{1}$} & {\black 0} & {\black 0} & {\black 0} & {\black 0} & {\black 0} & {\black 0} & {\black 0} \\
& {\black 0} & {\black 0} & {\black 0} & {\black 0} & {\black 0} & {\black 0} & {\black 0} & {\black 0} & {\black 0} & {\black 0} & {\black 0} & \scalebox{2}{$\mathbf{1}$} & {\black 0} & {\black 0} & {\black 0} & {\black 0} & {\black 0} & {\black 0} & {\black 0} & {\black 0} & {\black 0} & {\black 0} & {\black 0} & \scalebox{2}{$\mathbf{1}$} & {\black 0} & {\black 0} & {\black 0} & {\black 0} & {\black 0} & {\black 0} & {\black 0} & \scalebox{2}{$\mathbf{1}$} & \scalebox{2}{$\mathbf{1}$} & {\black 0} & \scalebox{2}{$\mathbf{1}$} & {\black 0} & {\black 0} & {\black 0} & {\black 0} & {\black 0} & {\black 0} & {\black 0} & {\black 0} & \scalebox{2}{$\mathbf{1}$} & {\black 0} & {\black 0} & \scalebox{2}{$\mathbf{1}$} & \scalebox{2}{$\mathbf{1}$} \\
&{\black 0} & {\black 0} & {\black 0} & {\black 0} & {\black 0} & {\black 0} & \scalebox{2}{$\mathbf{1}$} & {\black 0} & {\black 0} & {\black 0} & {\black 0} & {\black 0} & {\black 0} & {\black 0} & {\black 0} & {\black 0} & {\black 0} & {\black 0} & {\black 0} & {\black 0} & {\black 0} & \scalebox{2}{$\mathbf{1}$} & {\black 0} & {\black 0} & {\black 0} & {\black 0} & {\black 0} & {\black 0} & {\black 0} & {\black 0} & \scalebox{2}{$\mathbf{1}$} & {\black 0} & {\black 0} & {\black 0} & {\black 0} & {\black 0} & {\black 0} & {\black 0} & {\black 0} & {\black 0} & {\black 0} & {\black 0} & \scalebox{2}{$\mathbf{1}$} & {\black 0} & {\black 0} & {\black 0} & {\black 0} & {\black 0} \\
&{\black 0} & {\black 0} & {\black 0} & {\black 0} & {\black 0} & {\black 0} & {\black 0} & {\black 0} & {\black 0} & \scalebox{2}{$\mathbf{1}$} & {\black 0} & {\black 0} & {\black 0} & {\black 0} & {\black 0} & {\black 0} & {\black 0} & {\black 0} & \scalebox{2}{$\mathbf{1}$} & {\black 0} & {\black 0} & {\black 0} & {\black 0} & {\black 0} & {\black 0} & {\black 0} & {\black 0} & {\black 0} & {\black 0} & {\black 0} & {\black 0} & {\black 0} & {\black 0} & \scalebox{2}{$\mathbf{1}$} & {\black 0} & {\black 0} & {\black 0} & {\black 0} & {\black 0} & {\black 0} & {\black 0} & {\black 0} & {\black 0} & {\black 0} & {\black 0} & \scalebox{2}{$\mathbf{1}$} & {\black 0} & {\black 0} \\
&{\black 0} & {\black 0} & \scalebox{2}{$\mathbf{1}$} & {\black 0} & \scalebox{2}{$\mathbf{1}$} & {\black 0} & \scalebox{2}{$\mathbf{1}$} & {\black 0} & {\black 0} & {\black 0} & {\black 0} & {\black 0} & {\black 0} & {\black 0} & \scalebox{2}{$\mathbf{1}$} & {\black 0} & \scalebox{2}{$\mathbf{1}$} & {\black 0} & \scalebox{2}{$\mathbf{1}$} & {\black 0} & {\black 0} & {\black 0} & {\black 0} & {\black 0} & {\black 0} & {\black 0} & {\black 0} & {\black 0} & {\black 0} & {\black 0} & {\black 0} & {\black 0} & {\black 0} & {\black 0} & {\black 0} & {\black 0} & {\black 0} & {\black 0} & {\black 0} & {\black 0} & {\black 0} & {\black 0} & {\black 0} & {\black 0} & {\black 0} & {\black 0} & {\black 0} & {\black 0} \\
&{\black 0} & \scalebox{2}{$\mathbf{1}$} & {\black 0} & {\black 0} & {\black 0} & \scalebox{2}{$\mathbf{1}$} & {\black 0} & {\black 0} & {\black 0} & \scalebox{2}{$\mathbf{1}$} & {\black 0} & {\black 0} & {\black 0} & {\black 0} & {\black 0} & {\black 0} & {\black 0} & {\black 0} & {\black 0} & {\black 0} & {\black 0} & {\black 0} & {\black 0} & {\black 0} & {\black 0} & \scalebox{2}{$\mathbf{1}$} & {\black 0} & {\black 0} & {\black 0} & \scalebox{2}{$\mathbf{1}$} & {\black 0} & {\black 0} & {\black 0} & \scalebox{2}{$\mathbf{1}$} & {\black 0} & {\black 0} & {\black 0} & {\black 0} & {\black 0} & {\black 0} & {\black 0} & {\black 0} & {\black 0} & {\black 0} & {\black 0} & {\black 0} & {\black 0} & {\black 0} \\
& \scalebox{2}{$\mathbf{1}$} & {\black 0} & {\black 0} & {\black 0} & {\black 0} & {\black 0} & {\black 0} & {\black 0} & \scalebox{2}{$\mathbf{1}$} & {\black 0} & \scalebox{2}{$\mathbf{1}$} & {\black 0} & {\black 0} & {\black 0} & {\black 0} & {\black 0} & {\black 0} & {\black 0} & {\black 0} & {\black 0} & {\black 0} & {\black 0} & {\black 0} & {\black 0} & {\black 0} & {\black 0} & {\black 0} & {\black 0} & {\black 0} & {\black 0} & {\black 0} & {\black 0} & {\black 0} & {\black 0} & {\black 0} & {\black 0} & {\black 0} & {\black 0} & {\black 0} & \scalebox{2}{$\mathbf{1}$} & {\black 0} & {\black 0} & {\black 0} & \scalebox{2}{$\mathbf{1}$} & {\black 0} & {\black 0} & {\black 0} & \scalebox{2}{$\mathbf{1}$} \\
&{\black 0} & {\black 0} & {\black 0} & \scalebox{2}{$\mathbf{1}$} & {\black 0} & {\black 0} & {\black 0} & \scalebox{2}{$\mathbf{1}$} & {\black 0} & {\black 0} & {\black 0} & \scalebox{2}{$\mathbf{1}$} & {\black 0} & {\black 0} & {\black 0} & \scalebox{2}{$\mathbf{1}$} & {\black 0} & {\black 0} & {\black 0} & \scalebox{2}{$\mathbf{1}$} & {\black 0} & {\black 0} & {\black 0} & \scalebox{2}{$\mathbf{1}$} & {\black 0} & {\black 0} & {\black 0} & {\black 0} & {\black 0} & {\black 0} & {\black 0} & {\black 0} & {\black 0} & {\black 0} & {\black 0} & {\black 0} & {\black 0} & {\black 0} & {\black 0} & {\black 0} & {\black 0} & {\black 0} & {\black 0} & {\black 0} & {\black 0} & {\black 0} & {\black 0} & {\black 0} \\
&{\black 0} & {\black 0} & {\black 0} & {\black 0} & {\black 0} & {\black 0} & {\black 0} & {\black 0} & {\black 0} & {\black 0} & {\black 0} & {\black 0} & {\black 0} & \scalebox{2}{$\mathbf{1}$} & {\black 0} & {\black 0} & {\black 0} & \scalebox{2}{$\mathbf{1}$} & {\black 0} & {\black 0} & {\black 0} & \scalebox{2}{$\mathbf{1}$} & {\black 0} & {\black 0} & {\black 0} & {\black 0} & {\black 0} & {\black 0} & {\black 0} & {\black 0} & {\black 0} & {\black 0} & {\black 0} & {\black 0} & {\black 0} & {\black 0} & {\black 0} & {\black 0} & \scalebox{2}{$\mathbf{1}$} & {\black 0} & \scalebox{2}{$\mathbf{1}$} & {\black 0} & \scalebox{2}{$\mathbf{1}$} & {\black 0} & {\black 0} & {\black 0} & {\black 0} & {\black 0} \\
&{\black 0} & {\black 0} & {\black 0} & {\black 0} & {\black 0} & {\black 0} & {\black 0} & {\black 0} & {\black 0} & {\black 0} & {\black 0} & {\black 0} & \scalebox{2}{$\mathbf{1}$} & {\black 0} & {\black 0} & {\black 0} & {\black 0} & {\black 0} & {\black 0} & {\black 0} & \scalebox{2}{$\mathbf{1}$} & {\black 0} & \scalebox{2}{$\mathbf{1}$} & {\black 0} & \scalebox{2}{$\mathbf{1}$} & {\black 0} & {\black 0} & {\black 0} & {\black 0} & {\black 0} & {\black 0} & {\black 0} & \scalebox{2}{$\mathbf{1}$} & {\black 0} & \scalebox{2}{$\mathbf{1}$} & {\black 0} & {\black 0} & {\black 0} & {\black 0} & {\black 0} & {\black 0} & {\black 0} & {\black 0} & {\black 0} & {\black 0} & {\black 0} & {\black 0} & {\black 0} \\
&{\black 0} & {\black 0} & {\black 0} & {\black 0} & {\black 0} & {\black 0} & {\black 0} & {\black 0} & {\black 0} & {\black 0} & {\black 0} & {\black 0} & {\black 0} & {\black 0} & {\black 0} & {\black 0} & {\black 0} & {\black 0} & {\black 0} & {\black 0} & {\black 0} & {\black 0} & {\black 0} & {\black 0} & {\black 0} & {\black 0} & \scalebox{2}{$\mathbf{1}$} & {\black 0} & \scalebox{2}{$\mathbf{1}$} & {\black 0} & \scalebox{2}{$\mathbf{1}$} & {\black 0} & {\black 0} & {\black 0} & {\black 0} & {\black 0} & {\black 0} & \scalebox{2}{$\mathbf{1}$} & {\black 0} & {\black 0} & {\black 0} & \scalebox{2}{$\mathbf{1}$} & {\black 0} & {\black 0} & {\black 0} & \scalebox{2}{$\mathbf{1}$} & {\black 0} & {\black 0} \\
\scalebox{2}{\textrm{\black Row 48:}} &{\black 0} & {\black 0} & {\black 0} & {\black 0} & {\black 0} & {\black 0} & {\black 0} & {\black 0} & {\black 0} & {\black 0} & {\black 0} & {\black 0} & {\black 0} & {\black 0} & {\black 0} & {\black 0} & {\black 0} & {\black 0} & {\black 0} & {\black 0} & {\black 0} & {\black 0} & {\black 0} & {\black 0} & {\black 0} & {\black 0} & {\black 0} & \scalebox{2}{$\mathbf{1}$} & {\black 0} & {\black 0} & {\black 0} & \scalebox{2}{$\mathbf{1}$} & {\black 0} & {\black 0} & {\black 0} & \scalebox{2}{$\mathbf{1}$} & \scalebox{2}{$\mathbf{1}$} & {\black 0} & {\black 0} & {\black 0} & {\black 0} & {\black 0} & {\black 0} & {\black 0} & \scalebox{2}{$\mathbf{1}$} & {\black 0} & \scalebox{2}{$\mathbf{1}$} & {\black 0}
\end{array}}
$}}
\end{center}
\caption{The matrix $J$, where ${\rm x} = \frac{9+i\sqrt{15}}{8}$ and ${\rm y}=-\frac{3+i\sqrt{15}}{4}$.}
\label{table:J}
\end{table}

\section{A parametrisation of $X_0$}\label{s:param}
As stated in the introduction, an actual parametrisation of a family of representations is a crucial tool for constructing 
geometric structures. Among other things, it allows explicit constructions of fundamental domains \cite{F, DF8, ParkWi2}.

We describe in this section a parametrisation of a Zariski open subset of $X_0$ by actual matrices. 
More precisely, given four generic complex numbers $z_1$, $z_2$, $z_3$ and $z_4$ we are going to provide two 
pairs $(A,B)$ of regular order three elements in SL(3,$\C$) such that
\begin{itemize}
 \item  $A$ and $B$ have no common eigenvector in $\C^3$,
 \item  $A$ and $B$ satisfy
 \begin{equation}\label{eq:param} \tr (AB)=z_1,\, \tr (A^{-1}B)=z_2,\, \tr (A^{-1}B^{-1})=z_3,\, \tr (AB^{-1})=z_4.
 \end{equation}
\end{itemize}
Recall that the traces of $A$, $B$ and their inverses are zero as they are regular order three elements.
The genericity condition will be made explicit in Proposition \ref{pr:paramX0}.

We know from Lawton's theorem that $X_0$ is a double cover of $\C^4$. Parameters on $\C^4$ are 
given by the four trace parameters $z_1$, $z_2$, $z_3$, $z_4$. 
To describe this double cover, define the quantity
$$\Delta = z_{1}^{2} z_{3}^{2} - 2 z_{1} z_{2} z_{3} z_{4} + z_{2}^{2} z_{4}^{2} -
 4 z_{1}^{3} - 4 z_{2}^{3} - 4 z_{3}^{3} - 4 z_{4}^{3} + 18 z_{1} z_{3} + 18 z_{2} z_{4} - 27,$$
which is the discriminant of the trace equation \eqref{trace-equation} in Theorem \ref{theo-lawton}, in the case where the traces of $A$, $B$ and their inverses vanish.
We denote by $\delta$ a square root of $\Delta$. Let $\om$ be a non trivial cube root of $1$ and 
$k = \mathbb Q[\om](z_1,z_2,z_3,z_4)$.  

\begin{prop}\label{pr:paramX0}
Let $(a,b,c,d)$ be the following elements in $k[\delta]$:
\begin{eqnarray*}
 4a & = & \dfrac{z_1z_3-z_2z_4+6\om z_1+6\om ^2 z_3+9+\delta}{\om z_1+z_2+\om^2z_3+z_4+3}\nonumber\\
 4d & = & \dfrac{z_1z_3-z_2z_4+6\om^2z_1+6\om z_3+9-\delta}{\om^2 z_1+z_2+\om z_3+z_4+3}\nonumber\\
 b & = & \frac{z_1 - z_2-\om^2 z_3 + \om^2 z_4+3(\om^2-1)}{z_1 + z_2 + \om^2 z_3+\om^2 z_4+3\om} a 
 +(\om-1)\frac{z_1 + \om z_2 + \om z_3 + z_4+3\om^2}{z_1 + z_2 + \om^2 z_3+\om^2 z_4+3\om}\nonumber\\
 c & = & \frac{z_1 + \om z_2-\om z_3 - z_4+3(\om-1)}{z_1 + \om z_2 +  \om z_3+ z_4+3\om^2} d
 +(\om^2-1)\dfrac{z_1 + z_2 + \om^2 z_3 + \om^2z_4+3\om}{z_1 + \om z_2 + \om z_3 + z_4+3\om^2}.\nonumber\\
\end{eqnarray*}
Then for any 4-tuple of complex numbers $(z_1,z_2,z_3,z_4)$ such that $a$, $b$ $c$ and $d$ are well-defined, any pair 
$(A,B)$ of order three regular elements  of {\rm SL($3$,$\C$)},\, satisfying \eqref{eq:param} and such that the $\om^2$-eigenline 
for $A$ is different from the $\om$-eigenline for $B$,\,  is conjugate in {\rm SL(3,$\C$)} to one of the two pairs defined by
\begin{equation}\label{normalAB}
A=\begin{bmatrix} \om & 0 & 0 \\ \om^2 & 1 & 0 \\ b +a & 2\om a & \om^2  \end{bmatrix}\mbox{ and }
B = \begin{bmatrix}\om & 2\om^2d & c+d \\ 0 & 1 & \om\\ 0 & 0 & \om^2 \end{bmatrix}.
\end{equation}
\end{prop}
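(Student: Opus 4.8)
The plan is to reduce everything to a normal-form computation inside $\SL(3,\C)$. First I would put a regular order-three element into a convenient shape. A regular order-three matrix in $\SL(3,\C)$ has eigenvalues $1,j,j^2$; for such a matrix $A$, choosing a basis adapted to its eigenlines makes $A$ diagonal, but we instead want a shape that records the \emph{flag} (a distinguished eigenline and eigenplane). The hypothesis that the $j^2$-eigenline of $A$ differs from the $j$-eigenline of $B$ is exactly the genericity needed to fix a common frame: I would conjugate so that the $j^2$-eigenline of $A$ is $\C e_3$, the $j$-eigenline of $B$ is $\C e_1$, and then use the remaining freedom (the stabiliser of these two lines, a two-dimensional torus times scalings) to normalise two more entries. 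This should produce matrices of the lower-triangular-plus-one-extra and upper-triangular-plus-one-extra type displayed in \eqref{normalAB}, with $A$ having eigenvalues $j,1,j^2$ down a suitable filtration and similarly for $B$; the entries $a,b,c,d$ are the residual free parameters after normalisation. I would check directly that the matrices in \eqref{normalAB} do have order three and the correct eigenvalue structure (their characteristic polynomial is $X^3-1$ since the diagonal sums to $j+1+j^2=0$ and one computes $\det=1$, $\tr(A^{-1})=0$), so they are indeed regular order-three elements lying in $X_0$.

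Next I would compute the four trace functions $\tr(AB)$, $\tr(A^{-1}B)$, $\tr(A^{-1}B^{-1})$, $\tr(AB^{-1})$ for the matrices \eqref{normalAB} as explicit polynomials in $a,b,c,d$ (and $j$). Setting these equal to $z_1,z_2,z_3,z_4$ gives a system of four equations in the four unknowns $a,b,c,d$. By Lawton's theorem (Theorem \ref{theo-lawton}) the map to $\C^4$ is generically two-to-one on $X_0$, so I expect this system to have generically exactly two solutions, distinguished by the sign of a square root; the discriminant of that quadratic should be (a nonzero multiple of) the quantity $\Delta$ defined just before the Proposition, which by construction is the discriminant of the trace equation \eqref{trace-equation} specialised to $\tr A=\tr A^{-1}=\tr B=\tr B^{-1}=0$. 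Solving: two of the equations are \emph{linear} in one variable once the others are known — indeed the stated formulas express $b$ affinely in terms of $a$, and $c$ affinely in terms of $d$ — so the problem collapses to solving for $a$ and for $d$ separately, each governed by a quadratic whose two roots are the $\pm\delta$ branches. This is where the closed forms $4a=(\cdots+\delta)/(\cdots)$ and $4d=(\cdots-\delta)/(\cdots)$ come from, and I would verify them by back-substitution (this is the bulk of the routine algebra, best delegated to the companion notebook \cite{Notebook}). The denominators appearing in the formulas for $a,b,c,d$ are precisely the loci where the chosen normalisation degenerates; "well-defined" in the statement means these denominators are nonzero, which is the genericity hypothesis.

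Finally I would assemble the conjugacy statement. Given any pair $(A,B)$ of regular order-three elements satisfying \eqref{eq:param} with the stated eigenline genericity, the first step conjugates it into the normal form \eqref{normalAB} for \emph{some} values of the free parameters; the trace computation then forces those parameters to satisfy the four-equation system; and the solution analysis shows they must equal one of the two explicit tuples $(a,b,c,d)$. Hence $(A,B)$ is $\SL(3,\C)$-conjugate to one of the two pairs given by \eqref{normalAB}. The main obstacle is bookkeeping: carrying out the normalisation carefully enough to be sure the residual parameters are exactly two (not more, not fewer) and that no further conjugation can identify the two sign branches — i.e. confirming the cover is genuinely branched and the two pairs are generically non-conjugate. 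This matches the remark in the proof of Proposition \ref{pr:lower-bound} that the discriminant $\Delta$ is not a square, and it is the point I would be most careful about; the trace identities themselves, while lengthy, are mechanical.
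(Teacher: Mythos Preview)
Your approach is essentially the one in the paper: put the pair $(A,B)$ into the triangular normal form \eqref{normalAB}, write the four trace conditions as a system in $a,b,c,d$, and solve, observing that the double cover over $\C^4$ manifests as a quadratic with discriminant $\Delta$.

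One point deserves more care. Fixing only the $j^2$-eigenline of $A$ at $\C e_3$ and the $j$-eigenline of $B$ at $\C e_1$ is not enough to force $A$ lower-triangular and $B$ upper-triangular simultaneously; the stabiliser of two lines in $\SL(3,\C)$ is larger than a torus. The paper pins down the middle basis vector as a nonzero element of $V_A\cap V_B$, where $V_A$ is the plane spanned by the $j^2$- and $1$-eigenvectors of $A$ and $V_B$ the plane spanned by the $j$- and $1$-eigenvectors of $B$. This is what makes both matrices triangular at once, after which only a diagonal conjugation remains to set the two off-diagonal entries $j^2$ and $j$. Your sketch of the normalisation would need this ingredient to go through.

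On the algebra: the paper does not decouple $a$ and $d$ as you suggest. The system couples them through the product $ad$; the route taken is to solve the two equations linear in $(b,c)$ for $b,c$ in terms of $(a,d)$, substitute back to obtain a \emph{linear} relation between $a$ and $d$, and then reduce to a single quadratic in $a$ whose discriminant is $\Delta$. The value of $d$ is then recovered from $a$ via that linear relation (or by the symmetry $a\leftrightarrow d$, $b\leftrightarrow c$, $j\leftrightarrow j^2$ coming from conjugation by the antidiagonal). So the sign of $\delta$ in $d$ is not an independent choice but is forced by the one made for $a$, which is why there are two solutions and not four.
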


 It should be noted here that it is necessary that the $\omega$ and $\omega^2$ eigenlines for respectively $A$ and $B$ are 
disjoint to obtain a normalisation such as \eqref{normalAB}. This condition is always satisfied when the pair $(A,B)$ is 
irreducible, that is if the group $\la A,B\ra$ doesn't have a global fixed point in its action on $\C P^2$. 

\begin{proof}
A direct computation of the traces of $AB$, $A^{-1}B$, $A^{-1}B^{-1}$ and $AB^{-1}$ with
the given values leads to a verification of our parametrization \cite{Notebook}. We now indicate how to obtain 
these values.

Recall first that regular order three elements in SL(3,$\C$) have eigenvalue spectrum $\{1,\om,\om^2\}$.
First, one may conjugate the pair $(A,B)$ so that $A$ and $B$ are respectively lower and upper triangular, with 
eigenvalues organised as in \eqref{normalAB}. This amounts to choosing a 
basis of $\C^3$ of the form $(v_B,v,v_A)$ where $v_A$ (resp. $v_B$) is a $\om^2$-eigenvector for $A$ (resp. a 
$\om$-eigenvector for $B$), and $v$ is a non zero vector in the intersection $V_A\cap V_B$, where $V_A$  (resp. $V_B$) 
is spanned by $v_A$ and a $1$-eigenvector of $A$ (resp. $v_B$ and a $1$ eigenvector for $B$). Conjugating by a diagonal 
matrix allows to bring off-diagonal coefficients equal to $\om^2$ in $A$ and to $\om$ in $B$ as shown in \eqref{normalAB}.

We need now to determine $a,b,c,$ and $d$ from \eqref{eq:param}. These four conditions correspond to the following 
system of equations. 

\begin{align}[left= \bigl(\Sigma\bigr)\quad \empheqlbrace]
 (a+b)(d+c) +2a\om^2+2\om d & = z_1 \label{eqAB} \\
 (a-b)(d+c) -2a-2d+3& =  z_2 \label{eqAmB} \\
 (a-b)(d-c) +2\om a+2\om^2d& = z_3 \label{eqAmBm} \\
 (a+b)(d-c) -2a-2d+3& = z_4 \label{eqABm}
\end{align}

This system is relatively easy to solve using a computer and, for instance, Gr\"obner bases. 
However, it is also solvable by hand, and we indicate now how to do it. Before going any further, let us observe that 
conjugating the pair $(A,B)$ by the matrix
$$\begin{bmatrix} 0 & 0 & 1\\ 0 & 1 & 0\\ 1 & 0 & 0\end{bmatrix}$$
amounts to do following exchanges in $\Sigma$:
\begin{equation} a\longleftrightarrow d,\,b\longleftrightarrow c,\, \om\longleftrightarrow \om^2 \label{echange}.\end{equation}
More precisely, the left-hand sides of \eqref{eqAB} and \eqref{eqAmBm} are preserved by these changes, whereas the 
left-hand sides of \eqref{eqAmB} and \eqref{eqABm} are exchanged. 

Next, we compute linear combinations of the above equations, and obtain the following equivalent system:
\begin{align}[left= \bigl(\Sigma'\bigr)\empheqlbrace]
\eqref{eqAB}+\eqref{eqAmB}+\eqref{eqAmBm}+\eqref{eqABm} & :\quad 4ad-6a-6d-z_1-z_2-z_3-z_4+6=0\label{e1}\\
\eqref{eqAB}-\eqref{eqAmB}+\eqref{eqAmBm}-\eqref{eqABm} & :\quad 4bc+2a+2d-z_1+z_2-z_3+z_4-6=0\label{e2}\\
\eqref{eqAB}-\eqref{eqAmBm} & :\quad  2ac+2bd-4a\om+4d\om-2a+2d-z_1+z_3=0\label{e3}\\
\eqref{eqAmB}-\eqref{eqABm} & :\quad  2ac-2bd-z_2+z_4=0\label{e4}
\end{align}
To obtain the value of $a$ announced in the statement, we proceed as follows.
\begin{itemize}
 \item The two equations \eqref{e3} and \eqref{e4} are linear in $b$ and $c$. We solve them to obtain expressions of $b$ and $c$ in 
terms of $a$ and $d$.
 \item Plugging these expressions of $b$ and $c$ in \eqref{e2} and taking numerator, we obtain an equation that relates 
$a$ and $d$ and involves the monomials $a^2d$, $ad^2$, $a^2$, $d^2$, $ad$, $a$ and $d$. This equation can be simplified by observing 
that the product $ad$ can be expressed as an affine function of $a$ and $d$ using \eqref{e1}. Doing so, most of the monomials simplify and we obtain a linear relation between $a$ and $d$. This yields an expression of $d$ as a function of $a$, which can be inserted back in \eqref{e1}. 
We obtain this way a quadratic equation in $a$, which is:

\begin{align}\label{quadratics-a}
0  =  &  4(\om z_1+z_2+\om^2z_3+z_4+3)a^2 - 2(6\om z_1+6\om^2z_3+z_1z_3-z_2z_4+9)a \\
& +9+6\om z_1-3z_2+6\om^2z_3-3z_4  +\om^2z_1^2+z_2^2+\om z_3^2+z_4^2  \nonumber\\
&  -\om z_1z_2+2z_1z_3-\om z_1z_4-\om^2z_2z_3-z_2z_4-\om^2z_3z_4 \nonumber
\end{align}
\end{itemize}
The discriminant of this quadratic equation is $\Delta$, and we obtain two possible values for $a$, corresponding to
the two square roots of $\Delta$. We obtain in turn the value of $d$ given in the 
statement. Note that $d$ is obtained from $a$ by the exchanges $\om\longleftrightarrow \om^2$ and $z_2\longleftrightarrow z_4$. 
This correspond to the symmetry of the system $(\Sigma)$ given in \eqref{echange}.

As observed above, knowing the values of $a$ and $d$ gives us the values of $b$ and $c$. However, the expressions obtained by solving
\eqref{e3} and \eqref{e4} are not exactly those given in the statement, and simplifying them is quite intricate. 
The following strategy gives a way to determine $b$ and $c$ more directly. 
First of all, we know from Lawton's theorem and our determination of $a$ and $d$ that $b$ 
belongs to $k[\delta]$ (recall that $k=\mathbb Q[\om](z_1,z_2,z_3,z_4)$). 
Hence, we may look for it under the form $b = aP+Q$ where $P$ and $Q$ belong to 
$k$. We use this form in the equation \eqref{e3}$-$\eqref{e4},
and also plug the values of $a$ and $d$. This leads to an equation, linear in $P$ and $Q$, 
between two elements of $k[\delta]$. Isolating the coefficient of $\delta$
and the remaining part, we find two linear equations in $P$ and $Q$.  Solving those equations leads to the given 
value for $b$. The value for $c$ can be obtained using the symmetries of $\Sigma$ given in \eqref{echange}.
\end{proof}

\bibliographystyle{amsalpha}\bibliography{biblio}
\end{document}